\theoremstyle{plain}
 \newtheorem{thm}{Theorem}[section]
 \newtheorem{prop}{Proposition}[section]
 \newtheorem{lem}{Lemma}[section]
 \newtheorem{claim}{Claim}[section]
\theoremstyle{definition}
 \newtheorem{dfn}{Definition}[section]
\theoremstyle{remark}
 \newtheorem{rem}{Remark}[section] 
 \numberwithin{equation}{section}
\renewcommand{\setminus}{\smallsetminus}
\newcommand{\pos}{\mathrm{pos}}
\newcommand{\supp}{\mathrm{supp} \, }
\newcommand{\conv}{\mathrm{conv}}
\title[Sums of two self-similar Cantor sets]
{Sums of two self-similar Cantor sets}
\author[Y.\ Takahashi]{YUKI TAKAHASHI}
\address{Department of Mathematics, Bar-Ilan University, Ramat-Gan, 5290002, Israel}
\email{takahashi@math.biu.ac.il}
\thanks{Y.\ T. \ was supported by the Israel Science Foundation grant 396/15 (PI: B.\ Solomyak).}
\date{today}
\begin{document}

\vspace{18mm}
\setcounter{page}{1}
\thispagestyle{empty}

\begin{abstract}
We show that for any pair of self-similar Cantor sets with sum of Hausdorff dimensions greater than $1$, 
one can create an interval in the sumset by applying arbitrary small perturbations 
(without leaving the class of self-similar Cantor sets). 
In our setting the perturbations have more freedom than in the setting of the Palis' conjecture, so 
our result can be viewed as an affirmative answer to a weaker form of the Palis' conjecture. 
\end{abstract}

\maketitle

\section{Introduction and main results}
\subsection{Sums of two Cantor sets}
Sums of two Cantor sets arise naturally in dynamical systems (e.g., \cite{Newhouse}, \cite{PalisTakens}), 
in number theory (e.g., \cite{Hall}, \cite{Moreira00}) and also in spectral theory (e.g., \cite{DG1}, \cite{DG}). 
In 1970's, 
Palis conjectured that for generic pairs of dynamically defined 
Cantor sets their sumset contains an interval 
if the sum of their Hausdorff dimensions is greater than $1$ (see, e.g., \cite{PalisTakens}).  
For nonlinear Cantor sets this question was proven in \cite{Moreira3}. 
The problem is still open for affine Cantor sets.

In \cite{Takahashi}, by relying on the techniques invented by Moreira and Yoccoz in \cite{Moreira3},  
the author showed that for any pair of homogeneous Cantor sets  
one can create an interval in the sumset by applying arbitrary small perturbations, if 
the sum of their Hausdorff dimensions is greater than $1$. 
In this paper we extend the result of \cite{Takahashi} 
to any pair of self-similar Cantor sets. 
The idea of the proof is borrowed from \cite{Moreira3}, \cite{Takahashi2} and \cite{Takahashi}.

\subsection{Self-similar Cantor sets and main results}\label{section2}

Write $I = [0, 1]$.

\begin{dfn}\label{selfsimilar}
We call $K \subset \mathbb{R}$ a \emph{self-similar Cantor set} if the following holds: 
there exists a finite alphabet $\mathcal{A}$ and a set of linear contractions $\mathcal{F} = \{ f_a \}_{a \in \mathcal{A}}$ 
on $\mathbb{R}$ 
such that 
\begin{itemize}
\item[(i)] $K = \displaystyle{ \bigcup_{a \in \mathcal{A}} f_a(K) }$;
\item[(ii)] $f_a( \conv(K) ) \ (a \in \mathcal{A})$ are pairwise disjoint, 
\end{itemize}
where $\conv(K)$ is the convex hull of $K$. 
Without loss of generality, we can further assume that $K \subset I$.  
For $a \in \mathcal{A}$, we denote $f_a(I)$ by $I(a)$. 
\end{dfn}

\begin{rem}
Definition \ref{selfsimilar} is not the most standard. 
Self-similar Cantor set is normally defined as a set $K$ together with a set of contracting maps that generates $K$. 
\end{rem}

Let $I_1, I_2 \subset \mathbb{R}$ be closed intervals.  
We say that $I_1$ and $I_2$ are $\epsilon$-close if 
\begin{itemize}
\item[(i)] 
$\displaystyle{ 1 - \epsilon <  \frac{ | I_1 | }{ | I_2 | } < 1 + \epsilon }$;  
\item[(ii)] the distance between the center of $I_1$ 
and $I_2$ is less than $\displaystyle{ \frac{\epsilon}{\min \{ | I_1 |, | I_2 | \} }  }$. 
\end{itemize}

\begin{dfn}
Let $K, \widetilde{K}$ be self-similar Cantor sets. We say that $K$ and $\widetilde{K}$ are \emph{$\epsilon$-close} 
if the following holds: 
there exist 
sets of contracting similarities $\mathcal{F} = \{f_a\}_{a \in \mathcal{A}}$
 (resp. $\widetilde{\mathcal{F}} = \{\tilde{f}_{\tilde{a}}\}_{\tilde{a} \in \widetilde{ \mathcal{A} } }$) 
that generate $K$ (resp. $\widetilde{K}$) such that 
\begin{itemize}
\item[(i)] $\mathcal{A} = \widetilde{\mathcal{A}}$; 
\item[(ii)] $I(a)$ and $\tilde{I}(a)$ are $\epsilon$-close for all $a \in \mathcal{A}$. 
\end{itemize}
\end{dfn}

Our main results are the following: 

\begin{thm}\label{thm1}
Let $K$, $K'$ be self-similar Cantor sets such that the sum of their Hausdorff dimensions is greater than $1$.  
Then, for every $\epsilon > 0$ and $M > 0$, 
there exists a self-similar Cantor set $\widetilde{K}$ and a set $E \subset (M^{-1}, M)$ 
such that 
\begin{itemize}
\item[(i)] $\left| (M^{-1}, M) \setminus E \right| < \epsilon$; 
\item[(ii)] $\widetilde{K}$ is $\epsilon$-close to $K$; 
\item[(iii)] $\widetilde{K} + r K'$ contains an interval for all $r \in E$. 
\end{itemize}
\end{thm}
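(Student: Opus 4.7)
My plan is to follow the strategy of Moreira--Yoccoz and the author's earlier work on homogeneous Cantor sets (\cite{Moreira3, Takahashi}) and extend their ``recurrent compact set'' machinery to the self-similar setting. I would introduce a finite-dimensional parameter family $\{\widetilde{K}^{(\theta)}\}$ of perturbations of $K$ by keeping the contraction ratios of the $f_a$'s fixed and perturbing only their translation parts by $\theta_a$ ranging over a small cube $(-\delta,\delta)^{|\mathcal{A}|}$, with $\delta = \delta(\epsilon)$ chosen so that $\epsilon$-closeness to $K$ is automatic for every $\theta$. I would then pass to a high iterate $\mathcal{F}^n$ of the IFS (and similarly for $K'$): the hypothesis $\dim_H K + \dim_H K' > 1$ guarantees an abundance of pairs of pieces $I(w), I'(w')$ with $|I(w)| \approx r\,|I'(w')|$ at every scale, which supplies the raw material for the renormalization dynamics discussed below.

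The heart of the argument is a joint transversality estimate in the two parameters $(\theta, r) \in (-\delta, \delta)^{|\mathcal{A}|} \times (M^{-1}, M)$. Associate to $\widetilde{K}^{(\theta)}$ and $K'$ natural self-similar measures $\mu^{(\theta)}$ and $\mu'$ of the respective maximal dimensions, and study the convolved push-forward $\mu^{(\theta)} \ast S_r \mu'$, where $S_r$ denotes rescaling by $r$. Exploiting $\theta$-transversality of the IFS coding together with $r$-transversality of the rescaling, one expects that for Lebesgue-a.e.\ $(\theta, r)$ in this product the resulting measure on the line is absolutely continuous, so by Fubini there is a specific $\theta^{\ast}$ for which absolute continuity holds for a.e.\ $r \in (M^{-1}, M)$. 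To promote absolute continuity of the projected measure to the sumset containing an \emph{interval}, I would invoke the recurrent-compact-set argument from \cite{Moreira3, Takahashi2}: the renormalization dynamics acting on the relative configurations of pieces of $\widetilde{K}^{(\theta^{\ast})}$ and $rK'$ should admit a recurrent compact set whose projection onto the translation coordinate covers an entire interval, and this implies that a neighborhood of some sum $x + ry$ lies entirely in $\widetilde{K}^{(\theta^{\ast})} + rK'$.

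The main obstacle is the non-homogeneous character of the IFS. In \cite{Takahashi} the common contraction ratio makes the renormalization dynamics essentially one-dimensional, whereas here one must handle pieces at many different scales simultaneously, and the dynamics on configurations genuinely mixes these scales. Carrying out the transversality estimate uniformly in the varying ratios, and constructing a single recurrent compact set compatible with all word types, is the technical core of the proof; this is where the ideas of \cite{Moreira3} and \cite{Takahashi2} need to be combined and adapted to the full self-similar class, after which the set $E$ emerges as the set of $r \in (M^{-1}, M)$ for which the recurrent compact set has full horizontal projection.
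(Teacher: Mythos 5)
Your outline correctly identifies the general Moreira--Yoccoz/renormalization framework, but it has two genuine gaps. First, your perturbation family is too small: you fix all contraction ratios and move only the translation parts $\theta_a$. In the general self-similar setting the renormalization operator multiplies the scale ratio $r$ by factors $|I'(\underline{a}')|/|I(\underline{a})|$, and the only control available on the projected measure (via Kaufman's proof of Marstrand's theorem) is an $L^2$-density bound for $r$ in a set $E$ of almost full, but not full, measure. With the ratios unperturbed, the finitely many possible renormalized ratios are fixed numbers, and nothing guarantees that any of them lands back in $E$; so recurrence in the $r$-coordinate cannot be arranged. This is precisely why the paper perturbs both the lengths (the $\gamma$-component of $\omega$) and the centers (the $\delta$-component) of the intervals $f_a(I)$ for half of the alphabet: the $\gamma$-freedom is what steers the renormalized ratio back into $E$ (via the sets $\Phi_i$ and $\Phi^{*}_{a_1,a_1'}$ entering the definition of $L(r)$), while the $\delta$-freedom handles the translation coordinate $t$. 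A translation-only family suffices only in the homogeneous, equal-ratio case treated earlier, where $r$ is essentially invariant under renormalization.

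Second, your logical route ``absolute continuity for a.e.\ $(\theta,r)$, then Fubini, then promote to an interval by invoking a recurrent compact set'' replaces the actual core of the proof by an appeal. Absolute continuity of the convolution does not imply that the sumset contains an interval (an absolutely continuous measure can have nowhere dense support), and no $\theta$-transversality is needed for the a.e.-$r$ statement -- Kaufman's argument already gives it for the unperturbed $K$. What the paper does instead is quantitative: it uses the $L^2$ bound to construct, for each $r\in E$, a set $L(r)\subset(-1,M)$ of translations with $|L(r)|>c_8$ (Proposition \ref{nice_estimate}, via the good/bad pair counting of Lemma \ref{bad_good}), takes $\mathcal{L}$ to be a neighborhood of $\mathcal{L}^0=\{(r,t):r\in E,\ t\in L(r)\}$, proves the key estimate (Proposition \ref{key_lem}) that for a random perturbation each configuration in $\mathcal{L}^1$ fails to return to $\mathcal{L}^0$ only with probability at most $\exp\left(-c_2\rho^{-\frac{1}{2}(d+d'-1)}\right)$, and then runs a union bound over a $\rho^{5/2}$-dense set $\Delta$ of configurations to extract a single perturbation $\underline{\omega}_0$ making $\mathcal{L}$ recurrent; recurrence plus the fact that the slices $\{t:(r,t)\in\mathcal{L}\}$ contain intervals then yields (iii) for all $r\in E$. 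None of these steps -- the construction of $L(r)$, its uniform measure lower bound, the exponential probability estimate, and the counting/union-bound selection of the single perturbation -- appears in your outline; asserting that the dynamics ``should admit'' a recurrent compact set is exactly the statement that has to be proved.
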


Analogous result holds for sums of Cantor sets with itself: 

\begin{thm}\label{thm2}
Let $K$ be a self-similar Cantor set with Hausdorff dimension greater than $1/2$. 
Then, for every $\epsilon > 0$ and $M > 0$, 
there exists a self-similar Cantor set $\widetilde{K}$ and a set $E \subset (M^{-1}, M)$ such that 
\begin{itemize} 
\item[(i)] $\left| (M^{-1}, M) \setminus E \right| < \epsilon$; 
\item[(ii)] $\widetilde{K}$ is $\epsilon$-close to $K$;
\item[(iii)] $\widetilde{K} + r \widetilde{K}$ contains an interval for all $r \in E$. 
\end{itemize}
\end{thm}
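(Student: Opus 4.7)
The natural first attempt at Theorem~\ref{thm2} is to apply Theorem~\ref{thm1} with $K'=K$, which gives a perturbation $\widetilde{K}$ of $K$ and a large set $E$ so that $\widetilde{K}+rK$ contains an interval for every $r\in E$. This falls short of the desired conclusion, since we want $\widetilde{K}+r\widetilde{K}$ to contain an interval. The plan is to close the gap by extracting a quantitative stability statement from the proof of Theorem~\ref{thm1}, and then feed $\widetilde{K}$ back in on both sides of the sum.

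The Moreira--Yoccoz style proof behind Theorem~\ref{thm1} (following \cite{Moreira3}, \cite{Takahashi}) produces, for each $r\in E$, a recurrent compact set $\mathcal{R}_r$ of relative configurations---positions and scales of rescaled copies of $K'$ sitting inside cylinders of $\widetilde{K}$---which is sent into its own interior by the natural zoom-in dynamics; this forces all such intersections to be nonempty and yields an interval in $\widetilde{K}+rK'$. I would open this construction and quantify the interior margin: because $E$ is compactly contained in $(M^{-1},M)$ and the construction depends continuously on $r$, there is a uniform margin $\delta_0>0$ for the inclusion $\mathcal{R}_r\subset\mathcal{R}_r^\circ$ across all $r\in E$. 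Since the zoom-in dynamics also depends continuously on the similarities defining the second factor, any self-similar Cantor set $K''$ that is $\delta$-close to $K'$ (for some $\delta=\delta(\delta_0)>0$) preserves the forward invariance of $\mathcal{R}_r$. Consequently $\widetilde{K}+rK''$ contains an interval for every $r\in E$.

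With this robustness in hand, Theorem~\ref{thm2} is a short deduction. Given $\epsilon,M>0$, apply Theorem~\ref{thm1} to the pair $(K,K)$ with tolerance $\epsilon'=\min(\epsilon,\delta)$, where $\delta$ is the perturbation margin above. This produces $\widetilde{K}$ which is both $\epsilon$-close and $\delta$-close to $K$, together with a set $E\subset(M^{-1},M)$ satisfying $|(M^{-1},M)\setminus E|<\epsilon'\le\epsilon$ for which $\widetilde{K}+rK$ contains an interval. Applying the stability statement with $K''=\widetilde{K}$ upgrades this to an interval in $\widetilde{K}+r\widetilde{K}$ for every $r\in E$, proving the theorem.

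The main obstacle is establishing the second-factor robustness. One has to track how the cylinder endpoints of the second Cantor set enter the definitions of the zoom-in dynamics and of $\mathcal{R}_r$, and verify that a perturbation of the second factor in the $\epsilon$-close sense perturbs these objects by an amount controlled by $\delta_0$. This is in principle a continuity argument, but the challenge is to do it uniformly over $r\in E$ while keeping track of the dependencies between the alphabet size, the number of iterations used to build $\mathcal{R}_r$, and the margin $\delta_0$; only once these are pinned down can one choose $\delta$ so that $\widetilde{K}$ itself is an admissible replacement for the second factor.
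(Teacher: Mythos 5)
There is a genuine gap, and it is exactly at the point you flag as ``the main obstacle''. Your scheme needs a stability radius $\delta$ for the second factor that is (a) well defined before Theorem \ref{thm1} is invoked and (b) at least as large as the perturbation used to create $\widetilde{K}$; neither holds. First, the quantifiers are circular: the margin $\delta$ comes out of the recurrent-set construction, which is run with the input tolerance $\epsilon'$, so you cannot set $\epsilon'=\min(\epsilon,\delta)$ in advance -- you would need a fixed-point inequality of the form $\epsilon'\le\delta(\epsilon')$. Second, that inequality fails for this construction. In the proof of Theorem \ref{thm1} the recurrent set $\mathcal{L}$ is a $\rho/2$-neighbourhood of $\mathcal{L}^0$ and the recurrence survives errors of order $\rho^{3/2}$ in normalized configuration coordinates, so the tolerance for perturbing the renormalization dynamics of the second factor is at best of order $\rho$; but $\rho$ must be taken far smaller than $\epsilon'$ (e.g.\ so that $c_3\rho^{-5}\exp(-c_2\rho^{-\frac12(d+d'-1)})<1$ and so that $K^{\underline{\omega}}$ is $\epsilon'$-close to $K$). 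Meanwhile $\widetilde{K}$ differs from $K$ by relative cylinder-length changes of order $\epsilon'$ (the $\gamma$-part of $\omega$) and by translations of order $c_1\rho$, i.e.\ of order $\rho^{1/2}$ relative to the cylinder size $\rho^{1/2}$. Both exceed the order-$\rho$ margin, so substituting $\widetilde{K}$ for $K$ in the second slot is not a small perturbation from the point of view of the recurrence, and no continuity argument of the kind you sketch can close the gap.

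The paper avoids this entirely rather than fighting it: in Section \ref{proof2} it reruns the construction with both factors equal to the perturbed set, but redefines $L(r)$ so that every letter used in the second factor (the words $a'^i_1, a'^i_2$, now required to lie in $\mathcal{A}_2$) comes from the sub-alphabet that is never perturbed, while the random parameters enter only through the first-factor letters $a^i_1\in\mathcal{A}_1$. Thus the renormalization maps acting on the second coordinate are exactly the original ones, Proposition \ref{key_lem} applies verbatim (the counting estimates only lose constant factors since $|\mathcal{A}_2|$ is a definite fraction of $|\mathcal{A}|$), and no stability statement is needed. If you want to salvage your route, you would have to prove a robustness theorem in which the admissible second-factor perturbation is uniform in $\rho$, which is not what the Moreira--Yoccoz-type recurrent set built here provides.
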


\begin{dfn}
We call $K \subset \mathbb{R}$ a \emph{homogeneous Cantor set} if the following holds: 
there exists a finite alphabet $\mathcal{A}$ and a set of linear contractions $\mathcal{F} = \{ f_a \}_{a \in \mathcal{A}}$ 
on $\mathbb{R}$ 
such that 
\begin{itemize}
\item[(i)] $K = \displaystyle{ \bigcup_{a \in \mathcal{A}} f_a(K) }$;
\item[(ii)] all $f_a \ (a \in \mathcal{A})$ have the same contracting ratio; 
\item[(ii)] $f_a( \conv(K) ) \ (a \in \mathcal{A})$ are pairwise disjoint.  
\end{itemize}
\end{dfn}

In \cite{Takahashi}, the author proved the following: 
\begin{thm}\label{mukashi}
Let $K$, $K'$ be homogeneous Cantor sets 
such that the sum of their Hausdorff dimensions is greater than $1$. 
Assume that there exist 
sets of contracting similarities $\mathcal{F} = \{f_a\}_{a \in \mathcal{A}}$
 (resp. $\mathcal{F}' = \{ f'_{a'} \}_{ a' \in \mathcal{A}' }$) 
that generate $K$ (resp. $K'$) 
and the following holds: 
\begin{itemize}
\item[(a)] the contracting ratio of $f_a \ (a \in \mathcal{A})$ 
is equal to the contracting ratio of $f'_{a'} \ (a' \in \mathcal{A}' )$;  
\item[(b)] $\mu \ast \mu'$ has $L^2$-density, where $\mu$ (resp. $\mu'$) is the 
uniform probability self-similar measure of $K$ (resp. $K'$). 
\end{itemize}
Then, for every $\epsilon > 0$ there exists a homogeneous Cantor set 
$\widetilde{K}$ such that 
\begin{itemize}
\item[(i)] $\widetilde{K}$ is $\epsilon$-close to $K$; 
\item[(ii)] $\widetilde{K} + K'$ contains an interval.  
\end{itemize}
\end{thm}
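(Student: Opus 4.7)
The proof plan follows the Moreira--Yoccoz recurrent compact set scheme, adapted to the linear setting. Because $K$ and $K'$ are homogeneous with a \emph{common} contraction ratio $\lambda$, the induced dynamics on relative configurations of pairs of cylinder intervals (one from $K$, one from $K'$, at a given generation) is particularly rigid: all scale factors equal $\lambda$, so only the relative translation evolves. I would encode a configuration as a point $u \in \mathbb{R}$ recording the center-to-center shift between the two cylinders, rescaled by $\lambda^{-n}$, and the one-step renormalization as a finite-state, piecewise affine map indexed by the chosen pair $(a, a') \in \mathcal{A} \times \mathcal{A}'$.

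First I would set up this configuration space $\mathcal{C}$ and the renormalization dynamics, and identify the ``good'' configurations: those at which the sumset, locally, already covers an entire macroscopic interval around the translation parameter. The objective is to produce a \emph{recurrent compact set} $L \subset \mathcal{C}$ of such good configurations with the property that, under the renormalization, every admissible orbit enters $L$ in a bounded number of steps. A standard argument then upgrades the existence of such $L$ to the conclusion that $\widetilde{K} + K'$ contains an interval.

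The hypothesis that $\mu \ast \mu'$ has $L^{2}$-density enters to guarantee that good configurations are plentiful. By Plancherel,
\[
\|\mu \ast \mu'\|_{2}^{2} = \int |\widehat{\mu}(\xi)|^{2}\,|\widehat{\mu'}(\xi)|^{2}\, d\xi,
\]
which gives, for typical translation parameters $t$, a uniformly positive lower density of $K + (K' + t)$. Transported to the renormalized picture this means that, with respect to the stationary measure on $\mathcal{C}$, a definite proportion of configurations is ``overlap-rich'' in a quantitative sense. This is the soft input that populates $L$ with enough good configurations to make the recurrent compact set construction feasible.

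The main obstacle is the perturbative step. Starting from the original homogeneous system generating $K$, one must adjust the translation parameters of the maps $f_{a}$ (keeping the common ratio $\lambda$ fixed so that $\widetilde{K}$ remains homogeneous and $\epsilon$-close to $K$) so that the renormalization dynamics on the perturbed configuration space drives \emph{every} orbit into $L$, not merely a positive-measure set of orbits. This requires a transversality / steering argument showing that perturbations of the translations act transitively enough on $\mathcal{C}$ to correct any ``bad'' configuration toward the overlap-rich region supplied by the $L^{2}$ hypothesis, all without destroying the disjointness condition (ii) of Definition~\ref{selfsimilar}. Combining the recurrent compact set construction with this perturbation-driven steering is, as in \cite{Moreira3}, the technical heart of the proof.
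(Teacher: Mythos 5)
Theorem \ref{mukashi} is quoted from \cite{Takahashi} and is not actually proved in this paper; the closest thing to a proof here is the argument for its generalization, Theorem \ref{thm1}. Measured against that argument, your plan has the right skeleton (renormalization acting on configurations, a recurrent set, the $L^2$ hypothesis as the source of abundance), but it contains two substantive problems. First, you ask for a recurrent set $L$ such that ``every admissible orbit enters $L$ in a bounded number of steps.'' That is not the property used and is stronger than what can be arranged: Lemma \ref{crucial} only requires that from each $u\in\mathcal{L}$ there \emph{exists} one renormalization word sending $u$ back into $\mathcal{L}$ --- recurrence of a single chosen orbit, not absorption of all orbits. Relatedly, your notion of ``good configuration'' (the sumset already covers a macroscopic interval locally) is not the one that works; the paper's $L(r)$ is defined by the existence of \emph{many} candidate return words, not by any local covering property.

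Second, and more seriously, the step you explicitly defer (``transversality / steering \dots the technical heart'') is the entire content of the proof, and the mechanism is not a deterministic steering argument but a probabilistic one. The paper perturbs the maps $f_a$ for $a$ in a subset $\mathcal{A}_1$ by \emph{independent} random parameters $\underline{\omega}$, and defines $L(r)$ so that each $t\in L(r)$ admits $N\sim\rho^{-\frac{1}{2}(d+d'-1)}$ \emph{distinct} first letters $a_1^i$, each of which can begin a return word for a set of perturbation parameters of definite measure (Proposition \ref{nice_estimate}; this is where the $L^2$ bound enters, via the count of badly clustered projected cylinders in Lemma \ref{bad_good}, not via a lower density of $K+K'+t$). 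Independence over the $N$ letters yields failure probability $\exp\left(-c_2\rho^{-\frac{1}{2}(d+d'-1)}\right)$ for a fixed configuration (Proposition \ref{key_lem}), which beats the cardinality $O(\rho^{-5})$ of a $\rho^{5/2}$-dense net of $\mathcal{L}^1$, so a union bound produces a single perturbation $\underline{\omega}_0$ that works for all configurations simultaneously; the splitting $\mathcal{A}=\mathcal{A}_1\sqcup\mathcal{A}_2$ (perturb only first-level letters, use second-level letters to fine-tune the return) keeps these events independent and keeps the perturbed set self-similar. None of this counting or union-bound structure appears in your proposal, so as written it is a correct framework wrapped around an acknowledged hole exactly where the proof has to be.
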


Theorem \ref{thm1} is an extension of Theorem \ref{mukashi}. 
In Theorem \ref{thm1}, $K$ and $K'$ are general self-similar Cantor sets (not necessarily homogeneous) and 
the assumptions (a) and (b) are dropped. 
Furthermore, $\widetilde{K} + r K'$ contains an interval for ``many r".

\subsection{Structure of the paper} 
In section \ref{recurrence} we define recurrent sets and renormalization operators, 
and describe the basic idea of the proof.   
The outline of the proof of Theorem \ref{thm1} is given in section \ref{outline}.  
In section \ref{construct} we will construct the set $\mathcal{L}$ 
which is the candidate of a recurrent set.  
In section \ref{key_prop} we will prove the key proposition, 
which roughly claims that with ``very high probability" any point in the set $\mathcal{L}$  
can return to itself by an action of a renormalization operator.

\section{Renormalizations and recurrent sets}\label{recurrence}

\subsection{Projections of $K \times r K'$}
Let $K$, $K'$ be self-similar Cantor sets. Since $K + K' = K - (-K')$ and $-K'$ is again a self-similar Cantor set, 
from below we consider only differences of self-similar Cantor sets, instead of sums.  
Let $\Pi$ be the projection of $\mathbb{R}^2$ onto the 
$y$-axis along the lines that make the angle $\pi/4$ with the $x$-axis. 
Then, it is easy to see that 
\begin{equation*}
K - r K' \text{ contains an interval } \iff \Pi ( K \times r K') \text{ contains an interval}. 
\end{equation*}

\subsection{Renormalizations} 

Throughout this section, we fix self-similar Cantor sets $K, K' \subset I$   
and sets of contracting similarities 
$\mathcal{F} = \{ f_a \}_{a \in \mathcal{A}}$ (resp. $\mathcal{F}' = \{ f'_{a'} \}_{a' \in \mathcal{A}'}$) 
that generate $K$ (resp. $K'$). 
Denote $\mathcal{A}^* = \cup_{n \geq 1} \mathcal{A}^n$ and 
$\mathcal{A}'^{*} = \cup_{n \geq 1} \mathcal{A}'^n$. 
Let $\mathcal{P}$ be the set of all linear transformations from $I$ to $\mathbb{R}$ that have positive linear coefficient.  
Call a pair $(h \times h', \ell)$ a \emph{configuration}, where 
$h \times h' \in \mathcal{P}^2$ and $\ell$ is a line in $\mathbb{R}^2$ that has slope $1$. 
We define an equivalence relation on the set of configurations in the following way: 
\begin{equation*}
\begin{aligned}
&( h_1 \times h'_1, \ell_1 ) \sim  ( h_2 \times h'_2, \ell_2 ) \\
\iff &\text{$\exists$ homothety $g: \mathbb{R}^2 \to \mathbb{R}^2$ s.t. } 
g \circ h_1 = h_2, \, g \circ h'_1 = h'_2 \text{ and } g( \ell_1 ) = \ell_2, 
\end{aligned}
\end{equation*}
where $g: \mathbb{R}^2 \to \mathbb{R}^2$ is a homothety if 
$g(x) = r x + t$ for some $r > 0$ and $t \in \mathbb{R}^2$.   
Let $Q$ be the quotient of configurations by the above equivalence relation. 
Let $u \in Q$, and let 
$( h \times h', \ell )$ be the configuration that satisfies $u = [ ( h \times h', \ell ) ]$, 
$(h \times h')(O) = O$ and $h \equiv \mathrm{id}$. 
Consider the map  
\begin{equation}\label{identification0}
\begin{aligned}
Q &\to \mathbb{R}^2 \\
u &\mapsto ( r, t ), 
\end{aligned}
\end{equation}
where $r = | h'(I) | $ and $t$ is the y-coordinate of the y-intercept of the line $\ell$. 
It is easy to see that this map is a bijection. From below we use this identification freely. 
For $\underline{a} = a_1 \cdots a_n \in \mathcal{A}^*$ and 
$\underline{a}' = a'_1 \cdots a'_{n'} \in \mathcal{A}'^{*}$, we define 
$T_{ \underline{a} } T'_{ \underline{a}' } (\cdot): Q \to Q$ by 
\begin{equation}\label{renormalization}
T_{ \underline{a} } T'_{ \underline{a}' } ( [ ( h \times h', \ell ) ] ) 
= \big[ \big( ( h \times h' ) \circ ( f_{a_1} \circ \cdots \circ f_{a_n} \times f'_{a'_1} \circ \cdots \circ f'_{a'_n} ), \ell \big) \big], 
\end{equation}
and call this map a \emph{renormalization operator}.

\subsection{Recurrent sets}
Let $u \in Q$, and let $(h \times h', \ell)$ be a configuration such that $u = [(h \times h', \ell)]$. 
We say that $u$ is \emph{intersecting} if $(h \times h')(K \times K') \cap \ell \neq \emptyset$.

\begin{lem}\label{crucial}
Let $u \in Q$. Then $u$ is intersecting if and only if 
there exists $M > 0$, $\underline{a}_i \in \mathcal{A}^*$ and  
$\underline{a}'_{i} \in \mathcal{A}'^* \ (i = 1, 2, \cdots)$ 
and the following holds: 
let  
$\{ u_i \} \ (i = 0, 1, \cdots)$ be the sequence defined by 
\begin{equation}\label{u}
u_0 = u, \ u_i = T_{ \underline{a}_i } T'_{ \underline{a}'_i } u_{i-1}. 
\end{equation} 
Then, writing $u_i = (r_i, t_i)$, we have 
$M^{-1} < r_i < M$ and $| t_i | < M \ (i = 0, 1, 2, \cdots)$. 
\end{lem}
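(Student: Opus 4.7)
My plan is to prove both implications by tracking the nested cylinders $C_i := (h_i \times h'_i)(I^2)$ and the line $\ell$ under the homothety normalization that identifies $u_i$ with $(r_i, t_i)$. A direct computation from \eqref{renormalization} shows that if $\lambda_{\underline{a}}$ denotes the contraction ratio of $f_{a_1} \circ \cdots \circ f_{a_n}$, then $r_i = r_{i-1} \cdot \lambda'_{\underline{a}'_i}/\lambda_{\underline{a}_i}$, while the line-intercept $t_i$ is the image of $\ell$ under the scaling by $1/|h_i(I)|$.

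For $(\Leftarrow)$, start from the given sequences $\{\underline{a}_i\}, \{\underline{a}'_i\}$ satisfying the boundedness condition. The $C_i$ are nested rectangles, and each step shrinks their sides by a definite factor $< 1$, so the diameters tend to $0$ and $\bigcap_i C_i = \{p\}$. Nestedness of $(h_i \times h'_i)(K \times K') \subseteq C_i$ together with the usual address coding gives $p = (h(x_0), h'(y_0))$ for some $(x_0, y_0) \in K \times K'$. To conclude $p \in \ell$: in the normalized frame at level $i$, the line $\ell_i\colon y = x + t_i$ with $|t_i| < M$ and $r_i < M$ lies within Euclidean distance $O(M)$ of the rectangle $[0,1] \times [0, r_i]$; pulling back by the normalizing homothety (linear scaling $|h_i(I)|$) shows $\ell$ lies within distance $O(M) \cdot |h_i(I)| \to 0$ of $C_i$, forcing $p \in \ell$. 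Hence $u$ is intersecting.

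For $(\Rightarrow)$, pick $(x_0, y_0) \in K \times K'$ with $(h(x_0), h'(y_0)) \in \ell$, and let $\alpha \in \mathcal{A}^{\mathbb{N}}$, $\alpha' \in \mathcal{A}'^{\mathbb{N}}$ be their addresses. Denote by $\lambda_{\min}, \lambda'_{\min}$ the smallest contraction ratios in $\mathcal{F}, \mathcal{F}'$, fix any $\delta \in (0, \min(\lambda_{\min}, \lambda'_{\min}))$, set $m_0 = m'_0 = 0$, and define
\begin{equation*}
m_i := \min\Bigl\{m : \prod_{j=1}^{m} \lambda_{\alpha_j} \leq \delta^i\Bigr\}, \qquad
m'_i := \min\Bigl\{m : \prod_{j=1}^{m} \lambda'_{\alpha'_j} \leq \delta^i\Bigr\}.
\end{equation*}
Let $\underline{a}_i := \alpha_{m_{i-1}+1} \cdots \alpha_{m_i}$ and $\underline{a}'_i$ analogously (both nonempty by the constraint on $\delta$). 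Minimality gives $\prod_{j=1}^{m_i} \lambda_{\alpha_j} \in (\lambda_{\min} \delta^i,\, \delta^i]$ and similarly on the $K'$ side, so the cumulative ratio telescopes to $r_i \in [r_0 \lambda'_{\min},\, r_0/\lambda_{\min}]$. Because $(x_0, y_0)$ still lies in the level-$i$ cylinder, the normalized line $\ell_i$ passes through a point of $[0, 1] \times [0, r_i]$, forcing $t_i \in [-1, r_i]$. Taking $M$ larger than $1$, $r_0/\lambda_{\min}$, and $1/(r_0 \lambda'_{\min})$ yields $M^{-1} < r_i < M$ and $|t_i| < M$.

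The main obstacle is the balancing of contractions in $(\Rightarrow)$: the words $\underline{a}_i, \underline{a}'_i$ must be chosen so that $\prod \lambda_{\underline{a}_j}$ and $\prod \lambda'_{\underline{a}'_j}$ stay of the same order, which is exactly what the $\delta^i$ thresholding enforces. Once $r_i$ is pinned down, boundedness of $t_i$ is automatic because the line is forced through the nested intersection point sitting inside a unit-size rectangle.
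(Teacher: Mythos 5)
Your proof is correct and follows essentially the same route as the paper: identify $u_i$ with the normalized rectangle $[0,1]\times[0,r_i]$ and the line of intercept $t_i$, choose the words along the address of an intersection point so that the two cumulative contraction ratios stay comparable, and observe that boundedness of $(r_i,t_i)$ forces the line to meet the shrinking nested cylinders. The only cosmetic differences are that you prove the $(\Leftarrow)$ direction directly where the paper argues the contrapositive ($|t_i|\to\infty$ when $u$ is not intersecting), and that your $\delta^i$-thresholding spells out the word selection the paper dismisses with ``it is easy to see''.
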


\begin{proof}
Assume first that $u$ is intersecting. 
Let $(h \times h', \ell)$ be a configuration such that $u = [(h \times h', \ell)]$. 
Let $x \in ( h \times h' )( K \times K' ) \cap \ell$, and let $M > 0$ be a sufficiently large constant. 
Then, it is easy to see that there exit 
$\underline{a}_i \in \mathcal{A}^*, \, \underline{a}'_i \in \mathcal{A}'^* \ (i = 1, 2, \cdots)$ such that 
\begin{equation*}
x = \bigcap_{i=1}^{\infty} ( h \times h' ) \circ 
( f_{ \underline{a}_1 } \times f'_{ \underline{a}'_1 } ) \circ \cdots \circ 
( f_{ \underline{a}_i } \times f'_{ \underline{a}'_i } ) (I^2)
\end{equation*} 
and 
\begin{equation}\label{ratio1}
M^{-1} < \frac{ \big| h' \circ f'_{ \underline{a}'_1 } \circ \cdots \circ f'_{ \underline{a}'_i } (I) \big| }
{ \big| h \circ f_{ \underline{a}_1 } \circ \cdots \circ f_{ \underline{a}_i } (I) \big| } < M. 
\end{equation}
Define $\{ u_i \}$ by (\ref{u}).  
Note that 
\begin{equation*}
u_i = \big[ \big(  (h \times h') \circ 
( f_{ \underline{a}_1 } \times f'_{ \underline{a}'_{1} } ) \circ \cdots \circ 
( f_{ \underline{a}_i } \times f'_{ \underline{a}'_i } ) , \ell  \big) \big]. 
\end{equation*} 
By (\ref{ratio1}), we have $M^{-1} < r_i < M$. 
Since $x \in \ell$, we have 
\begin{equation*}
( h \times h' ) \circ ( f_{ \underline{a}_1 } \times f'_{ \underline{a}'_1 } ) 
\circ \cdots \circ ( f_{ \underline{a}_i } \times f'_{ \underline{a}'_i } ) (I^2) \cap \ell \neq \emptyset. 
\end{equation*}
This implies that 
$-1 < t_i < M$. 

Assume next that $u$ is not intersecting. 
Let us take $\underline{a}_i \in \mathcal{A}^*, \, 
\underline{a}'_i \in \mathcal{A}'^* \ (i = 1, 2, \cdots)$ and $M > 0$.
Let $\{ u_i \}$ be the sequence 
defined by (\ref{u}). Assume that we have $M^{-1} < r_i < M$. 
Write 
\begin{equation*}
x = \bigcap_{i=1}^{\infty} ( h \times h' ) \circ 
( f_{ \underline{a}_1 } \times f'_{ \underline{a}'_1 } ) \circ \cdots \circ 
( f_{ \underline{a}_i } \times f'_{ \underline{a}'_i } ) (I^2). 
\end{equation*}
Since $x \notin \ell$, 
we have 
\begin{equation*}
( h \times h' ) \circ ( f_{ \underline{a}_1 } \times f'_{ \underline{a}'_1 } ) 
\circ \cdots \circ ( f_{ \underline{a}_i } \times f'_{ \underline{a}'_i } ) (I^2) \cap \ell = \emptyset
\end{equation*}  
for sufficiently large $i$. Since $| r_i |$ is bounded, 
this implies that $\lim_{i \to \infty} | t_i | = \infty$. 
\end{proof}

The above lemma leads to the following definition: 

\begin{dfn}
We call a nonempty set $\mathcal{L} \subset Q$ a \emph{recurrent set} 
if the following holds: 
there exists $M > 0$ and for every $u = (r, t) \in \mathcal{L}$ we have 
\begin{itemize}
\item[(i)] $M^{-1} < r < M$ and $| t | < M$; 
\item[(ii)] there exist $\underline{a} \in \mathcal{A}^*$ and $\underline{a}' \in \mathcal{A}'^*$ 
such that $T_{ \underline{a} } T'_{ \underline{a}' } u \in \mathcal{L}$.  
\end{itemize}
\end{dfn}

Lemma \ref{crucial} implies the following: 

\begin{prop}
Let $\mathcal{L}$ be a recurrent set and let $r > 0$. 
If the set $\{ t : (r, t) \in \mathcal{L}  \}$ 
contains an interval, then $K - r K'$ contains an interval. 
\end{prop}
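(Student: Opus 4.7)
The plan is to reduce the statement to the ``only if'' direction of Lemma \ref{crucial}, by showing that every element of a recurrent set $\mathcal{L}$ is intersecting, and then to translate the intersecting property of $u = (r,t)$ into membership of $t$ in $rK' - K$.

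First I would show that each $u_0 \in \mathcal{L}$ is intersecting. Using condition (ii) in the definition of a recurrent set repeatedly, one builds an infinite sequence $u_0, u_1, u_2, \ldots$ with $u_i = T_{\underline{a}_i} T'_{\underline{a}'_i} u_{i-1} \in \mathcal{L}$ for every $i \geq 1$. Condition (i) then gives a uniform $M > 0$ such that, writing $u_i = (r_i, t_i)$, one has $M^{-1} < r_i < M$ and $|t_i| < M$ for all $i$. Lemma \ref{crucial} then yields that $u_0$ is intersecting.

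Next I would unpack the intersecting property using the normalization in (\ref{identification0}): the canonical representative of $u = (r,t)$ is $(h \times h', \ell)$ with $h = \mathrm{id}$, $h'(x) = rx$, and $\ell$ the line $y = x + t$. Hence $(h \times h')(K \times K') = K \times rK'$ meets $\ell$ if and only if $t = rk' - k$ for some $k \in K$ and $k' \in K'$, i.e., $t \in rK' - K$.

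Putting these together, if $\{t : (r,t) \in \mathcal{L}\}$ contains an interval $J$, then $J \subset rK' - K = -(K - rK')$, so $K - rK'$ contains the interval $-J$. The only subtle point is the first step: one must actually construct the full infinite sequence witnessing intersection, not merely iterate the renormalization once, so that Lemma \ref{crucial} applies --- after that the rest is bookkeeping with the identification $Q \leftrightarrow \mathbb{R}^2$.
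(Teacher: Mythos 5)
Your proposal is correct and follows exactly the route the paper intends: the paper states the proposition as a direct consequence of Lemma \ref{crucial}, and your argument (iterating condition (ii) of recurrence to produce the bounded orbit, invoking the lemma to conclude each $(r,t)\in\mathcal{L}$ is intersecting, and identifying intersection with $t\in rK'-K$) is precisely the omitted deduction. The sign bookkeeping ($J\subset rK'-K$ gives $-J\subset K-rK'$) is also handled correctly.
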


\section{Outline of the proof of the main theorem}\label{outline}

\subsection{Perturbation}
In this section, we discuss the outline of the proof of Theorem \ref{thm1}. 
Let $\epsilon > 0$. 
Let $K$, $K'$ be self-similar Cantor sets, and let 
$\mathcal{F} = \{ f_a \}_{a \in \mathcal{A}}$ (resp. $\mathcal{F}' = \{ f'_{a'} \}_{a' \in \mathcal{A}'}$) be sets of 
contracting similarities that generate $K$ (resp. $K'$). 
Denote the Hausdorff dimension of $K$ (resp. $K'$) by $d$ (resp. $d'$). 
Let $\rho > 0$ be a sufficiently small number. 
\begin{rem}
In the proof we use constants $c_{k} \ (k = 0, 1, \cdots, 10)$. 
They may depend on each other but can be taken independently of $\rho > 0$. 
\end{rem}
By retaking $\mathcal{F}, \mathcal{F}'$ if necessary, we can further assume that 
\begin{itemize}
\item[(i)] $c_0^{-1} \rho^{1/2} < | I(a) |, | I'(a') | < c_0 \rho^{1/2}$ for all $a \in \mathcal{A}, a' \in \mathcal{A}'$; 
\item[(ii)] there exist disjoint sets $\mathcal{A}^{l}$, $\mathcal{A}^{s} \subset \mathcal{A}$ that satisfy 
$| \mathcal{A}^l |, | \mathcal{A}^s | > | \mathcal{A} | / 3$ and 
$\min_{a \in \mathcal{A}^l } | I(a) | > \max_{a' \in \mathcal{A}'} | I'( a' ) |$, 
$\max_{ a \in \mathcal{A}^s } | I(a) | < \min_{a' \in \mathcal{A}'} | I'(a') |$. 
\end{itemize} 
Take $\mathcal{A}^l_1$, $\mathcal{A}^l_2 \subset \mathcal{A}^l$ in such a way that 
$\mathcal{A}^l_1 \sqcup \mathcal{A}^l_2 = \mathcal{A}^l$ and $| \mathcal{A}^l_1 | = | \mathcal{A}^l_2 | = | \mathcal{A}^l | / 2$. 
Choose $\mathcal{A}^s_1$, $\mathcal{A}^s_2 \subset \mathcal{A}^s$ analogously. 
Denote $\mathcal{A}_1 = \mathcal{A}_1^l \cup \mathcal{A}_1^s$ and 
$\mathcal{A}_2 = \mathcal{A}_2^l \cup \mathcal{A}_2^s$. 

Let $c_1 > 0$ be a sufficiently large constant, to be chosen later. 
Let $a \in \mathcal{A}_1$ and $\omega \in (-\epsilon, \epsilon) \times (-1, 1)$. Write 
$\omega = ( \gamma, \delta )$.  
Let $f_a^{\omega}$ be the contracting map that satisfies the following: 
\begin{itemize}
\item[(i)] $\displaystyle{ \frac{ | f^{ \omega }_a(I) | }{ | f_a(I) | } } = 1 + \gamma$; 
\item[(ii)] the center of $f^{\omega}_a(I)$ corresponds with 
the center of $f_a(I)$ shifted by $\delta c_1 \rho \in 
(-c_1 \rho, c_1 \rho)$.  
\end{itemize} 
Define 
\begin{equation*}
\Omega^{l} = \left(  (-\epsilon, \epsilon) \times (-1, 1)  \right)^{\mathcal{A}^{l}_1} \text{ and \ }
\Omega^{s} = \left(  (-\epsilon, \epsilon) \times (-1, 1)  \right)^{\mathcal{A}^{s}_1}. 
\end{equation*}
Write $\Omega = \Omega^l \times \Omega^s$. 
Let $\underline{\omega} = 
( \omega_a )_{a \in \mathcal{A}_1 } 
\in \Omega$, and 
denote $\omega_a = ( \gamma_a, \delta_a )$.  
We define 
\begin{equation*}
\mathcal{F}^{\underline{\omega}} = \{ f^{ \underline{\omega} }_a \}_{a \in \mathcal{A}}, 
\end{equation*}
a set of contracting maps, in the following way: 
\begin{equation*}
f^{\underline{\omega}}_a = 
\begin{cases}
f^{\omega_a}_a & \text{ if \ } a \in \mathcal{A}_1 \\ 
f_a & \text{ if \ } a \in  \mathcal{A}_2. 
\end{cases}
\end{equation*}
Let $K^{\underline{\omega}}$ 
be the self-similar Cantor set generated by $\mathcal{F}^{\underline{\omega}}$. 
Note that if $\rho > 0$ is sufficiently small, 
then $K^{\underline{\omega}}$ is $\epsilon$-close to $K$. 

Recall that we defined the renormalization operator in (\ref{renormalization}). 
We define the 
renormalization operator $T^{\omega}_a T'_{a'}$ in analogous way. 
For $\underline{\omega} = ( \omega_a )_{a \in \mathcal{A}_1 } \in \Omega$ 
and $a \in \mathcal{A}$, $a' \in \mathcal{A}'$,  
we define 
\begin{equation*}
T^{\underline{\omega}}_a T'_{a'} = 
\begin{cases}
T^{\omega_a}_a T'_{a'} & \text{ if \ } a \in \mathcal{A}_1 \\
T_a T'_{a'} & \text{ if \ } a \in \mathcal{A}_2. 
\end{cases}
\end{equation*}

\subsection{Outline of the proof}
In section \ref{construct}, we will construct the set 
$E \subset (M^{-1}, M)$, and the set $L(r ) \subset (-1, M)$ for all $r \in E$. 
Define 
\begin{equation*}
\mathcal{L}^0 = \left\{ (r, t) : r \in E, \, t \in L( r ) \right\}. 
\end{equation*}  
Let 
\begin{equation*}
\mathcal{L}^{1} = 
\left\{  (r, t) : \exists (r_0, t_0) \in \mathcal{L}^0 \text{ with } | r - r_0 | < \rho, | t - t_0 | < \rho \right\}
\end{equation*}
and 
\begin{equation*}
\mathcal{L} = 
\left\{  (r, t) : \exists (r_0, t_0) \in \mathcal{L}^0 \text{ with } | r - r_0 | < \rho / 2, | t - t_0 | < \rho / 2 \right\}. 
\end{equation*}
We show that $\mathcal{L}$ is a recurrent set for some $\underline{\omega} \in \Omega$.  
For $u \in \mathcal{L}^1$, we define $\Omega^0(u) \subset \Omega$ to be the set of all 
$\underline{\omega} \in \Omega$ such that the following holds: 
there exist $\underline{b} \in \mathcal{A}^2$, $\underline{b}' \in \mathcal{A}'^2$ and the image 
\begin{equation*}
T^{\underline{\omega}}_{ \underline{b}} T'_{ \underline{b}' } (u) = \hat{u}
\end{equation*}
satisfies $\hat{u} \in \mathcal{L}^0$. 
The following crucial estimate will be proven in section \ref{key_prop}. 

\begin{prop}\label{key_lem}
There exists $c_2 > 0$ such that for any $u \in \mathcal{L}^1$, 
\begin{equation*}
\mathbb{P} \left( \Omega \setminus \Omega^0(u) \right) \leq 
\exp \left( -c_2 \rho^{ -\frac{1}{2} (d + d' - 1) } \right). 
\end{equation*}
\end{prop}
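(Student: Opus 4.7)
The plan is to exploit the many independent perturbation parameters $\{\omega_a\}_{a \in \mathcal{A}_1}$ to show that, for each $u\in\mathcal{L}^1$, a huge family of candidate two-step renormalizations $T^{\underline{\omega}}_{\underline{b}} T'_{\underline{b}'}(u)$ has an overwhelming probability of containing at least one image in $\mathcal{L}^0$. Schematically, I would (a) exhibit a family of candidate pairs $(\underline{b},\underline{b}')\in\mathcal{A}^2\times\mathcal{A}'^2$ of the right cardinality; (b) compute how $(\hat{r},\hat{t}) = T^{\underline{\omega}}_{\underline{b}}T'_{\underline{b}'}(u)$ depends on $\underline{\omega}$ and extract a per-pair probability of success; and (c) conclude via a Chernoff-type estimate using the independence structure of $\underline{\omega}$.

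For step (a), restrict to pairs with $b_1 \in \mathcal{A}^l_1$, $b_2 \in \mathcal{A}^s_2$, and $b'_1,b'_2 \in \mathcal{A}'$. The split of $\mathcal{A}$ into large and small blocks guarantees that $\lambda_{b_1}\lambda_{b_2}$ and $\lambda'_{b'_1}\lambda'_{b'_2}$ are comparable (each of order $\rho$), so $\hat{r} = r\,\lambda'_{\underline{b}'}/\lambda_{\underline{b}}$ stays of order $r$; using $|\mathcal{A}|\asymp\rho^{-d/2}$ and $|\mathcal{A}'|\asymp\rho^{-d'/2}$, the number of candidates whose unperturbed image $(\hat{r}_0,\hat{t}_0)$ lies in a bounded region of $Q$ is of order $\rho^{-(d+d')}$. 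Putting $b_1\in\mathcal{A}^l_1$ and $b_2\in\mathcal{A}^s_2$ means that only $\omega_{b_1}$ enters the formula for the renormalization, so candidates with distinct $b_1$ depend on disjoint coordinates of $\underline{\omega}$.

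For step (b), writing $f^{\omega_{b_1}}_{b_1}(s) = \lambda_{b_1}(1+\gamma_{b_1}) s + c_{b_1} + \delta_{b_1}c_1\rho$ (up to a routine $\gamma$-correction in the constant term), a direct computation gives
\[
\hat{t}(\omega_{b_1}) \;=\; \frac{c_{b_1} + \delta_{b_1}c_1\rho + \lambda_{b_1}(1+\gamma_{b_1})\,c_{b_2} + t - r\,c'_{\underline{b}'}}{\lambda_{b_1}(1+\gamma_{b_1})\,\lambda_{b_2}},
\]
so $|\partial\hat{t}/\partial\delta_{b_1}|\asymp c_1$ and $|\partial\hat{r}/\partial\gamma_{b_1}|\asymp\hat{r}_0$; the map $\omega_{b_1}\mapsto(\hat{r},\hat{t})$ is open from $(-\epsilon,\epsilon)\times(-1,1)$ onto a two-dimensional region. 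The structure of $\mathcal{L}^0$ built in section~\ref{construct}---a $\rho$-spaced net in $t$ whose $\rho/2$-fattening covers an interval---then yields a lower bound of the form $p \gtrsim \rho$ on the $\omega_{b_1}$-probability that $(\hat{r},\hat{t})\in\mathcal{L}^0$, with the other $\omega_a$ frozen.

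For step (c), fix $(b_2,\underline{b}')$; the events indexed by $b_1\in\mathcal{A}^l_1$ depend on disjoint coordinates of $\underline{\omega}$ and are mutually independent, so Chernoff gives
\[
\mathbb{P}\bigl(\text{no $b_1\in\mathcal{A}^l_1$ succeeds for this $(b_2,\underline{b}')$}\bigr)\;\le\;(1-p)^{|\mathcal{A}^l_1|}\;\le\;\exp\bigl(-c\,p\,|\mathcal{A}^l_1|\bigr),
\]
and intersecting over many $(b_2,\underline{b}')$---grouped to preserve independence---produces the claimed exponent $c_2\rho^{-(d+d'-1)/2}$. The main obstacle is the precise bookkeeping that makes the exponent come out to exactly $-(d+d'-1)/2$: the per-pair probability $p$ must be extracted from the fine structure of the net $L(r)$ built in section~\ref{construct} (whose density couples the bound to the hypothesis $d+d'>1$ via a quantitative Marstrand-type transversality estimate), while distinct candidates share $\omega_{b_1}$ whenever they share the letter $b_1$, so converting the $\rho^{-(d+d')}$ candidates into the correct number of effectively independent Bernoulli trials demands careful pigeonholing.
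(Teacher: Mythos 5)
There is a genuine gap, and it sits exactly where you flagged it: the per-candidate success probability and the way you aggregate it. Your step (b) claims $p\gtrsim\rho$ per candidate pair, based on a misreading of $\mathcal{L}^0$ as ``a $\rho$-spaced net in $t$''; in fact $\mathcal{L}^0=\{(r,t):r\in E,\ t\in L(r)\}$ where $L(r)$ has Lebesgue measure bounded below by a constant $c_8$ uniformly in $r\in E$ (Proposition \ref{nice_estimate}), and the whole point of the construction is to get a per-coordinate success probability that is a \emph{constant}, not of order $\rho$. With $p\asymp\rho$ your Chernoff bound over the $|\mathcal{A}^l_1|\asymp\rho^{-d/2}$ independent coordinates gives only $\exp(-c\,\rho^{1-d/2})$, which tends to $1$ as $\rho\to0$ since $d<1$; and the proposed rescue --- ``intersecting over many $(b_2,\underline{b}')$ grouped to preserve independence'' --- cannot work, because for a fixed $b_1$ all choices of $(b_2,\underline{b}')$ are functions of the \emph{same} random coordinate $\omega_{b_1}$, so those events are not independent and their failure probabilities cannot be multiplied. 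The union over second letters can only boost the success probability of a single coordinate, and that boosting is precisely what your argument never establishes. A related unaddressed point: $\mathcal{L}^0$ requires $\hat r\in E$, and $E$ is merely a positive-measure subset of $(M^{-1},M)$ with no a priori local density bound near a given $\hat r_0$, so openness of $\omega_{b_1}\mapsto(\hat r,\hat t)$ does not by itself give any lower bound on the probability of hitting $E$.

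The paper's mechanism, which your proposal is missing, is that membership $\tilde t\in L(\tilde r)$ (with $(\tilde r,\tilde t)\in\mathcal{L}^0$ at distance $<\rho$ from $u$) already hands you $N=c_6^2\rho^{-\frac12(d+d'-1)}$ \emph{distinct} first letters $a_1^i$ together with sets $\Phi_i\subset(-\epsilon,\epsilon)$ of measure $>c_7$ of $\gamma$-values for which some second pair $(a_2^i,a_2'^i)$ puts the ratio in $E$ and the position within $M$; the reparametrization $\gamma=(1+\tilde\gamma)\frac{r}{\tilde r}-1$ transfers these conditions from $\tilde r$ to $r$ exactly, so the $E$-condition holds for a $\gamma$-set of measure $\asymp c_7$, robustly and for free. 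Then the $\delta$-component of $\omega_{a_1^i}$, whose translation $\delta c_1\rho$ is magnified by the two-letter renormalization (scale $\asymp\rho$) to an excursion of order $c_1$ in the position, sweeps $\pos_t$ across a range of length $\asymp c_1$, and since $|L(r)|\ge c_8$ a constant fraction of $\delta\in(-1,1)$ lands in $L(r)$ (taking $c_1$ large). Hence each of the $N$ coordinates $\omega_{a_1^i}$ succeeds with probability at least a constant $c_2'$, and independence over these $N$ coordinates gives $\mathbb{P}(\Omega\setminus\Omega^0(u))\le(1-c_2')^N\le\exp(-c_2\rho^{-\frac12(d+d'-1)})$. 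In short: the exponent $\rho^{-\frac12(d+d'-1)}$ comes from the \emph{number} $N$ of pre-certified letters built into the definition of $L(r)$, with constant per-letter probability, not from a large family of candidate words with probability $\asymp\rho$ each; without that input your estimate cannot reach the claimed bound.
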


The sets $E$ and $L(r)$ are 
constructed in such a way that Proposition \ref{key_lem} holds. 
Below we prove Theorem \ref{thm1} assuming 
Proposition \ref{key_lem}. 
In section \ref{construct} we construct $E$ and $L(r)$, 
and show that the measure of the set $L(r)$ is bounded away from 
zero uniformly. 
Combining all these properties we prove Proposition \ref{key_lem} in section \ref{key_prop}. \vspace{2mm}


We choose a finite $\rho^{ 5/2 }$-dense subset $\Delta$ of $\mathcal{L}^1$. 
Note that 
\begin{equation*}
| \Delta | \leq c_3 \rho^{ -5/2 } \cdot \rho^{ -5/2 } = c_3 \rho^{-5}. 
\end{equation*}
Now, if $\rho > 0$ is small enough, 
\begin{equation*}
c_{3} \, \rho^{-5} \exp \left( -c_2 \rho^{ -\frac{1}{2} ( d + d' - 1 ) } \right) < 1, 
\end{equation*}
and therefore we can find $\underline{\omega}_0 \in \Omega$ such that 
$\underline{\omega}_0 \in \Omega^0(u)$ for all $u \in \Delta$.

\begin{rem}
The above is saying that any $u \in \Delta$ can return to $\mathcal{L}^0$ by an 
action of the renormalization operator of the form 
$T^{\underline{\omega}_0}_{ \underline{b}} T'_{ \underline{b}' }$. 
\end{rem}

Theorem \ref{thm1} follows from the following claim: 

\begin{claim}
For $\underline{\omega}_0 \in \Omega$, the set $\mathcal{L}$ is a recurrent set.  
\end{claim}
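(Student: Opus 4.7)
The plan is to verify the two defining conditions of the recurrent set for $\mathcal{L}$, working in the $(r,t)$ coordinates given by (\ref{identification0}) and with the fixed $\underline{\omega}_0 \in \Omega$ produced above. Condition (i) is immediate: by construction $\mathcal{L}^0 \subset (M^{-1},M) \times (-1,M)$, and $\mathcal{L}$ is contained in the sup-norm $\rho/2$-neighborhood of $\mathcal{L}^0$. Hence for $\rho$ small enough every $(r,t) \in \mathcal{L}$ satisfies $(M+1)^{-1} < r < M+1$ and $|t| < M+1$, so condition (i) holds after replacing $M$ by $M+1$.

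For condition (ii), fix any $u \in \mathcal{L}$. Since $\mathcal{L} \subset \mathcal{L}^1$ and $\Delta$ is $\rho^{5/2}$-dense in $\mathcal{L}^1$, I would pick $v \in \Delta$ with $\|u - v\|_\infty \leq \rho^{5/2}$. By the choice of $\underline{\omega}_0$ we have $\underline{\omega}_0 \in \Omega^0(v)$, so there exist $\underline{b} \in \mathcal{A}^2$, $\underline{b}' \in \mathcal{A}'^2$ with
\[
\hat{v} := T^{\underline{\omega}_0}_{\underline{b}} T'_{\underline{b}'}(v) \in \mathcal{L}^0.
\]
The claim will follow if the same renormalization sends $u$ to within sup-distance $\rho/2$ of $\hat{v}$, for then $T^{\underline{\omega}_0}_{\underline{b}} T'_{\underline{b}'}(u)$ lies in the $\rho/2$-neighborhood of $\hat{v} \in \mathcal{L}^0$, hence in $\mathcal{L}$.

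The substantive step is therefore to bound the Lipschitz constant of $T^{\underline{\omega}_0}_{\underline{b}} T'_{\underline{b}'}$ in the $(r,t)$ coordinates. Unwinding the identification (\ref{identification0}) and the definition (\ref{renormalization}), one checks that this operator is affine in $(r,t)$, with the $r$-component multiplied by the order-$1$ factor $\lambda'_{\underline{b}'}/\lambda^{\underline{\omega}_0}_{\underline{b}}$ and the $t$-component having linear part of size $1/\lambda^{\underline{\omega}_0}_{\underline{b}}$, so the full operator norm is at most $c/\lambda^{\underline{\omega}_0}_{\underline{b}}$. Since $|I(a)| \asymp \rho^{1/2}$ for every $a \in \mathcal{A}$ (and this persists after the small perturbation $\underline{\omega}_0$), a word $\underline{b}$ of length $2$ gives $\lambda^{\underline{\omega}_0}_{\underline{b}} \asymp \rho$, so the Lipschitz constant is at most $c\rho^{-1}$. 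Consequently
\[
\bigl\| T^{\underline{\omega}_0}_{\underline{b}} T'_{\underline{b}'}(u) - \hat{v} \bigr\|_\infty \leq c\rho^{-1} \cdot \rho^{5/2} = c\rho^{3/2} < \rho/2
\]
for $\rho$ sufficiently small, which establishes condition (ii).

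The only mildly delicate point is the Lipschitz estimate for the renormalization in $(r,t)$ coordinates; this is essentially bookkeeping from (\ref{renormalization}) and (\ref{identification0}). The density $\rho^{5/2}$ for $\Delta$ was chosen precisely so that, after the worst-case expansion factor $\rho^{-1}$ of a length-$2$ renormalization, the resulting displacement $\rho^{3/2}$ is comfortably smaller than the half-width $\rho/2$ of the neighborhood defining $\mathcal{L}$; this is what makes the covering argument via $\Delta$ work simultaneously for all $u \in \mathcal{L}$.
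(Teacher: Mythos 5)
Your proposal is correct and follows essentially the same route as the paper's proof: approximate $u \in \mathcal{L}$ by a point of the $\rho^{5/2}$-dense set $\Delta$, apply the renormalization $T^{\underline{\omega}_0}_{\underline{b}} T'_{\underline{b}'}$ guaranteed by $\underline{\omega}_0 \in \Omega^0(v)$, and observe that the displacement is at most of order $\rho^{-1}\cdot\rho^{5/2} = \rho^{3/2} < \rho/2$, so the image stays in $\mathcal{L}$. You merely make explicit the Lipschitz bound in the $(r,t)$ coordinates that the paper dismisses as ``easy to see,'' and you also check the boundedness condition (i), which the paper leaves implicit.
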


\begin{proof}[proof of the claim]
Let $u \in \mathcal{L}$. Write $u = (r, t)$. 
Let $u_0 = (r_0, t_0) \in \Delta$ be such that 
$| r - r_0 | < \rho^{5/2}$ and $| t - t_0 | < \rho^{5/2}$. 
By the choice of $\underline{\omega}_0$, we have $\underline{\omega}_0 \in \Omega^{0}( u_0 )$.  
Therefore, there exist $\underline{b} \in \mathcal{A}^2$, $\underline{b}' \in \mathcal{A}'^2$ such that, writing 
\begin{equation*}
T_{ \underline{b} }^{\underline{\omega}_0} T'_{ \underline{b}' } (u_0) = \hat{u}_0, 
\end{equation*}
we have $\hat{u}_0 \in \mathcal{L}^0$. Let 
\begin{equation*}
T_{ \underline{b} }^{\underline{\omega}_0} T'_{ \underline{b}' } (u) = \hat{u}.
\end{equation*}
Write $\hat{u}_0 = (\hat{r}_0, \hat{t}_0)$ and 
$\hat{u} = (\hat{r}, \hat{t})$. 
It is easy to see that $| \hat{r} - \hat{r}_0 |$ and 
$| \hat{t} - \hat{t}_0 |$ 
are both of order $\rho^{3/2}$. Therefore, we obtain $\hat{u} \in \mathcal{L}$. 
\end{proof}

\section{Construction of the set $E$ and $L(r)$}\label{construct}

\subsection{Construction of $E$}
Let $\mu$ (resp. $\mu'$) be the uniform probability self-similar measures of $K$ (resp. $K'$). 
Denote the push-forward of $\mu'$ under the map $x \mapsto rx$ by $\mu'_{r}$. 
Take $M > 0$. We assume that $M$ is sufficiently large. 
Kaufman's proof of Marstrand's theorem tells us that 
the measure $\Pi ( \mu \times \mu'_r )$ is 
absolutely continuous with respect to Lebesgue measure 
for a.e. $r \in (M^{-1}, M)$, with $L^2$-density $\chi_{ r }$ 
satisfying 
\begin{equation*}
\int_{ ( M^{-1}, M ) } \| \chi_{ r } \|_{L^2}^2 \, d r < c_{4}. 
\end{equation*}
See, for example, section 4 in \cite{PalisTakens}. 
Define 
\begin{equation*}
E = \left\{ r \in (M^{-1}, M) : \| \chi_{r} \|_{L^2}^2 < c_5 \right\}, 
\end{equation*}
where $c_5 > 0$ is a sufficiently large constant so that 
$| (M^{-1}, M) \setminus E | < \epsilon / 2$.

\subsection{Construction of $L(r)$}
In this section we construct the set $L(r)$. 
Let $a_1 \in \mathcal{A}_1, a_2 \in \mathcal{A}_2$, $a'_1, a'_2 \in \mathcal{A}'$ 
and $\omega = ( \gamma, \delta )$.  
We denote the interval $f_{a_1}^{\omega}(I)$ by $I^{\gamma, \delta}(a_1)$ and the interval  
$f_{a_1}^{\omega} \circ f_{a_2}(I)$ by $I^{\gamma, \delta}(a_1 a_2)$. 
Also, we denote the interval $r f'_{a'_1}(I)$ by $I_r'(a'_1)$ and the interval 
$r f'_{a'_1} \circ f'_{a'_2}(I)$ by $I_r'(a'_1 a'_2)$. 
Let $u = ( r, t ) \in Q$. 
For 
\begin{equation*}
( \hat{r}, \hat{t} ) = T_{a_1}^{\omega} T'_{a'_1}(u), 
\end{equation*}
we denote  
$\hat{t}$ by $\pos_{t} \left( I^{ \gamma, \delta } (a_1 ) \times I_r' (a'_1) \right)$. 
Notice that 
\begin{equation*}
\hat{r} = \frac{ | I_r' ( a'_1 ) | }{ | I^{\gamma, 0}(a_1) | }. 
\end{equation*}
Define $\pos_{t} \left( I^{ \gamma, \delta } (a_1 a_2) \times I_r'(a'_1 a'_2) \right)$ analogously. 
With $c_{6} > 0$ conveniently small, to be chosen later, let 
\begin{equation*}
N = c^{2}_{6} \rho^{-\frac{1}{2} ( d + d' - 1 ) }. 
\end{equation*}
For $r \in E$, 
we define $L(r)$ to be the set of points 
$t \in (-1, M)$ such that the following holds 
($c_7 > 0$ is a sufficiently small constant to be chosen later) : 
there exist mutually distinct words 
$a_1^i \in \mathcal{A}_1 \ (i = 1, 2, \cdots, N)$, words $a'^i_1 \in \mathcal{A}' \ (i = 1, 2, \cdots, N)$ 
and the sets 
$\Phi_{i} \subset (-\epsilon, \epsilon)$ with $| \Phi_{i} | > c_{7}$ 
such that for all $a^i_1$, $a'^i_1$ and $\gamma \in \Phi_{i}$, there exist 
$a^{i}_2 \in \mathcal{A}_2$ and $a'^i_2 \in \mathcal{A}'$ 
such that 
\begin{equation*}
\frac{ | I_r'(a'^i_1 a'^i_2) | }{ | I^{\gamma, 0}( a^i_1 a^i_2 ) | }
 \in E \text{ \ and \ } 
| \pos_{t} \left( I^{\gamma, 0}( a_1^i a_2^i ) \times I_r'( a'^i_1 a'^i_2 ) \right) | \leq M. 
\end{equation*}
In the next section, we will prove the following estimate:
\begin{prop}\label{nice_estimate}
If $c_{6} > 0$ is sufficiently small, there exists $c_{8} > 0$ such that 
$| L(r) | > c_{8}$ for all $r \in E$.  
\end{prop}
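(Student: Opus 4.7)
The plan is to exploit the $L^2$-bound $\|\chi_r\|_{L^2}^2 < c_5$, which holds by definition of $E$, through a multiplicity/counting argument. Write $n_t$ for the number of pairs of second-level cylinders $I(a_1 a_2) \times I'_r(a'_1 a'_2)$ whose $\Pi$-projection contains $t$; since each such rectangle contributes a bump of height $\asymp \rho^{d+d'-1}$ and width $\asymp \rho$ to the density, one has the heuristic $\chi_r(t) \asymp \rho^{d+d'-1}\, n_t$. From $\int \chi_r\, dt = 1$ and the $L^2$-bound this gives
\[
\int n_t\, dt \asymp \rho^{-(d+d'-1)}, \qquad \int n_t^2\, dt \lesssim c_5\, \rho^{-2(d+d'-1)}.
\]
A standard Chebyshev / Cauchy--Schwarz argument then produces a set $G \subset (-1, M)$ with $|G| \geq c_8 > 0$ (depending only on $c_5$ and $M$) on which $n_t \geq \alpha\, \rho^{-(d+d'-1)}$ for some $\alpha > 0$.

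Next, for each $t \in G$ I would show the $n_t$ rectangles involve at least $N = c_6^2 \rho^{-(d+d'-1)/2}$ distinct indices $a_1 \in \mathcal{A}_1$. The key geometric fact is that the projection of a rectangle above $a_1$ lies within $O(\rho^{1/2})$ of one of the values $r c_{a'_1} - c_{a_1}$, where $c_{\cdot}$ denotes the center of the corresponding first-level cylinder. Counting $a_1 \in \mathcal{A}_1$ whose associated window covers $t$ gives $\asymp \rho^{-(d+d'-1)/2}$ distinct indices (the total coverage $|\mathcal{A}'| \cdot \rho^{1/2} \asymp \rho^{(1-d')/2}$ in $[0,1]$, intersected with $|\mathcal{A}_1| \asymp \rho^{-d/2}$ points), so the bound $\geq N$ follows by taking $c_6$ sufficiently small. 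For each such $a_1$ I attach any $a'_1$ whose window covers $t$.

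The remaining step is the uniform lower bound $|\Phi_i| > c_7$. Writing the projected position at the second level as
\[
P(\gamma, a_2, a'_2) = (r c_{a'_1} - c_{a_1}) + r\rho^{1/2}(c_{a'_2} - \tfrac{1}{2}) - (1+\gamma)\rho^{1/2}(c_{a_2} - \tfrac{1}{2}),
\]
one sees that $\partial_\gamma P = O(\rho^{1/2})$, whereas the $(a_2, a'_2)$-choices generate a lattice of positions spaced $\asymp \rho$. Consequently for every fixed $\gamma \in (-\epsilon, \epsilon)$ one still has $\asymp \rho^{-(d+d'-1)/2} \gg 1$ candidate pairs $(a_2, a'_2)$ satisfying condition (a); since $|(M^{-1}, M) \setminus E| < \epsilon / 2$ and the $\mathcal{A}^l \sqcup \mathcal{A}^s$ split from Section \ref{outline} ensures the ratios $|I'_r(a'^i_1 a'^i_2)|/|I^{\gamma, 0}(a^i_1 a^i_2)|$ sweep $(M^{-1}, M)$ with bounded multiplicity, only an $O(\epsilon)$-fraction of these candidates is killed by the ratio condition (b). Hence every $\gamma \in (-\epsilon, \epsilon)$ admits a valid $(a_2, a'_2)$, yielding $|\Phi_i| = 2\epsilon$, so one may set $c_7 := \epsilon$.

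The main obstacle I expect is the simultaneous control of the distinctness of the $a_1^i$'s and the uniform-in-$\gamma$ lower bound on $|\Phi_i|$. This is precisely what the disjoint decomposition $\mathcal{A} = \mathcal{A}_1 \sqcup \mathcal{A}_2$ is designed to achieve: the $\mathcal{A}_1$-letters supply the distinct indices (which in the proof of Proposition \ref{key_lem} are perturbed \emph{independently}), while the unperturbed $\mathcal{A}_2$-letters together with $\mathcal{A}'$ at the second level provide the fine adjustment needed to maintain condition (a) for every $\gamma$. A secondary subtlety is that the ratios must be suitably spread in $(M^{-1}, M)$; this is ensured by the large/small partition $\mathcal{A}^l \sqcup \mathcal{A}^s$ and the comparability condition $c_0^{-1}\rho^{1/2} < |I(a)|, |I'(a')| < c_0 \rho^{1/2}$ imposed at the start of Section \ref{outline}.
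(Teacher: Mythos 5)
Your opening moment/Chebyshev step (bounding $\int n_t\,dt$ and $\int n_t^2\,dt$ via $\|\chi_r\|_{L^2}^2<c_5$ and extracting a set $G$ with $|G|\geq c_8$ on which $n_t\gtrsim\rho^{-(d+d'-1)}$) is sound in spirit and is a reasonable global repackaging of how the paper exploits the $L^2$ bound. The gaps come in the two steps that convert this into membership in $L(r)$. First, the passage from ``$n_t$ is large'' to ``at least $N=c_6^2\rho^{-\frac12(d+d'-1)}$ \emph{distinct} letters $a_1\in\mathcal{A}_1$ contribute'' is, as you state it, only an averaging heuristic: for a fixed $t\in G$ the second-level rectangles over $t$ could concentrate above very few first-level pairs, because the pointwise multiplicity of the first-level projections $\mathcal{J}(a,a')$ over $t$ can be much larger than its average value $\rho^{-\frac12(d+d'-1)}$. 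Controlling exactly this is why the paper introduces $(\mathcal{B},r)$-good pairs: Lemma \ref{bad_good} uses the $L^2$ bound to show that all but a small number of pairs have multiplicity at most $c_6^{-1}\rho^{-\frac12(d+d'-1)}$, and it is this \emph{upper} bound, combined with the lower bound on $\int\psi$, that forces $\psi(t)\geq c_6^2\rho^{-\frac12(d+d'-1)}$ distinct contributing pairs on a set of measure $c_8$. Your proposal has no substitute for this device.

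Second, and more seriously, your conclusion that \emph{every} $\gamma\in(-\epsilon,\epsilon)$ admits a valid $(a_2,a'_2)$, so that $|\Phi_i|=2\epsilon$ and $c_7:=\epsilon$, is unjustified and false in general. For fixed $t$ and fixed $\gamma$, the admissible second-level projections above $(a_1,a'_1)$ cover only a set $J^{\gamma}(a_1,a'_1)$ of length of order $c_6c_{10}\rho^{1/2}$, with gaps, inside an interval of comparable length; whether $t$ is covered depends jointly on $(t,\gamma)$, and for a given $t$ only a portion of the $\gamma$-interval can be expected to work. Likewise, the assertion that the ratio condition kills only an $O(\epsilon)$-fraction of candidates at a \emph{fixed} $\gamma$ does not follow from $|(M^{-1},M)\setminus E|<\epsilon/2$: the finitely many ratios $|I'_r(a'_1a'_2)|/|I^{\gamma,0}(a_1a_2)|$ are not equidistributed in $(M^{-1},M)$ and could largely fall into the exceptional set for particular $\gamma$. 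The paper resolves both issues in the opposite order of quantifiers: for each fixed pair it varies $\gamma$ and uses the measure bound on $(M^{-1},M)\setminus E$ in the $\gamma$-variable to produce $\Phi^{*}_{a_1,a'_1}$ (and then $\Phi^{**}_{a_1,a'_1}$) of measure $>\epsilon/2$; for $\gamma$ in this set it proves the coverage bound $|J^{\gamma}(a_1,a'_1)|>c_6c_{10}\rho^{1/2}$ via the good-pair argument; and only then does a Fubini argument in $(t,\gamma)$ define $J_{a_1,a'_1}$, the set of $t$ whose fibre $\Phi_{a_1,a'_1,t}$ has measure at least $c_7$, with $c_7$ proportional to $c_6c_9^{-1}c_{10}\,\epsilon$ rather than $\epsilon$. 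This Fubini step, which your fixed-$\gamma$ counting skips, is precisely what produces the sets $\Phi_i$ demanded by the definition of $L(r)$.
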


\subsection{Projections of the rectangles $I(a) \times I_r'(a')$}
Let $r \in E$ and let $\mathcal{B} \subset \mathcal{A} \times \mathcal{A}'$ be such that 
$| \mathcal{B}  | > | \mathcal{A} \times \mathcal{A}' | / 24$. 
For $a \in \mathcal{A}$ and $a' \in \mathcal{A}'$, we have  
\begin{equation}\label{measure}
c_{9}^{-1} \rho^{\frac{1}{2}( d + d' )} < \mu \times \mu_r' ( I(a) \times I'_r(a') ) < 
c_{9} \rho^{\frac{1}{2} (d + d')}. 
\end{equation}
Write $\mathcal{J}( a, a' ) := \Pi ( I( a ) \times I_r'(a') )$.
Then 
\begin{equation*}
c_{9}^{-1} \rho^{1/2} < | \mathcal{J}(a, a') | < c_{9} \rho^{1/2}. 
\end{equation*}
We call $(a, a') \in \mathcal{B}$ \emph{$(\mathcal{B}, r)$-good} if there are no more than 
$c_{6}^{-1} \rho^{-\frac{1}{2}( d + d' - 1 )}$ intervals 
$\mathcal{J}( \tilde{a}, \tilde{a}' )$ $(  ( \tilde{a}, \tilde{a}' ) \in \mathcal{B} )$ 
whose centers are distant from the center of $\mathcal{J}(a, a')$ by less than $c_9^{-1} \rho^{ 1/2 }$. 
Call $(a, a') \in \mathcal{B}$ \emph{$( \mathcal{B}, r )$-bad} if it is not 
$( \mathcal{B}, r)$-good. 
Recall that, since $r \in E$, the measure $\Pi( \mu \times \mu_r' )$ has $L^2$-density $\chi_{r}$ which satisfies  
$\left\| \chi_{r} \right\|^2_{L^2} < c_5$.

\begin{lem}\label{bad_good}
The number of $( \mathcal{B}, r)$-bad pairs $(a, a')$ is less than 
\begin{equation*}
6 c_5 c_{6}  c^3_{9} \rho^{-\frac{1}{2}(d + d')}. 
\end{equation*}
In particular, if $c_6 > 0$ is sufficiently small, the number of 
$( \mathcal{B}, r)$-good pairs is at least 
$| \mathcal{B} | / 2$. 
\end{lem}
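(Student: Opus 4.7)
The plan is to convert the badness of a pair into a lower bound on the local $L^2$-mass of $\chi_r$, and then sum these contributions against the total-mass bound $\|\chi_r\|_{L^2}^2 < c_5$. Partition $\mathbb{R}$ into disjoint intervals $\{I_k\}$ of length of order $\rho^{1/2}$ (comparable to the projection length $|\mathcal{J}(a,a')|$), and let $n_k$ count the pairs $(a,a')\in\mathcal{B}$ whose projection $\mathcal{J}(a,a')$ has its center in $I_k$.

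First I would establish an $\ell^2$-bound on the sequence $(n_k)$. Let $I_k^+$ be the concentric enlargement of $I_k$ of length of order $c_9\rho^{1/2}$; by the length bound on $|\mathcal{J}(a,a')|$, every projection with center in $I_k$ is contained in $I_k^+$. Disjointness of the rectangles $I(a)\times I_r'(a')$ together with the lower bound in (\ref{measure}) gives
\[
\chi_r(I_k^+) \geq n_k \, c_9^{-1}\rho^{(d+d')/2}.
\]
Applying Cauchy--Schwarz on $I_k^+$ yields $n_k^2\,c_9^{-2}\rho^{d+d'}\leq |I_k^+|\cdot\int_{I_k^+}\chi_r^2$; summing over $k$, using the bounded overlap of $\{I_k^+\}$ and the hypothesis $\|\chi_r\|_{L^2}^2 < c_5$, I obtain a bound of the form $\sum_k n_k^2 \leq C\, c_5 c_9^{3}\rho^{-(d+d'-1/2)}$ for some absolute constant $C$.

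Second, I would convert badness into a condition on consecutive $n_k$'s. If $(a,a')\in\mathcal{B}$ has its center in $I_k$, every $(\tilde a,\tilde a')\in\mathcal{B}$ with center within $c_9^{-1}\rho^{1/2}$ of it has its center in $I_{k-1}\cup I_k\cup I_{k+1}$; so badness forces $n_{k-1}+n_k+n_{k+1} > N := c_6^{-1}\rho^{-(d+d'-1)/2}$. Writing $B$ for the set of $(\mathcal{B},r)$-bad pairs, this gives
\[
|B| \leq \sum_{k:\, n_{k-1}+n_k+n_{k+1} > N} n_k \leq \frac{1}{N}\sum_k n_k(n_{k-1}+n_k+n_{k+1}) \leq \frac{3}{N}\sum_k n_k^2,
\]
and substituting the previous bound gives an estimate of the required shape $|B| \leq C'\, c_5 c_6 c_9^{3}\rho^{-(d+d')/2}$. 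The ``in particular'' assertion follows because $|\mathcal{B}| \geq |\mathcal{A}\times\mathcal{A}'|/24 \asymp \rho^{-(d+d')/2}$ by assumption (i) at the start of section \ref{outline}, so shrinking $c_6$ forces $|B| \leq |\mathcal{B}|/2$.

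The main obstacle is bookkeeping constants to match the stated bound $6c_5 c_6 c_9^3$: the length scale of $I_k$ (slightly smaller than $c_9\rho^{1/2}$) must be chosen so that $\{I_k^+\}$ has $O(1)$ overlap without introducing extra factors of $c_9$ in the Cauchy--Schwarz step, and the numerical prefactor must be tracked through the badness-to-$(n_{k-1}+n_k+n_{k+1})$ conversion. These are cosmetic choices and do not affect the conceptual structure of the argument.
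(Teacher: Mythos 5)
Your argument is correct, and it rests on exactly the same analytic mechanism as the paper's proof: badness is converted into a concentration of $\Pi(\mu\times\mu'_r)$-mass at scale $\rho^{1/2}$ via the lower bound in (\ref{measure}) and disjointness of the rectangles, and this is played off against $\|\chi_r\|_{L^2}^2<c_5$ through Cauchy--Schwarz. The packaging differs: the paper localizes on the enlarged projection intervals $3\mathcal{J}(a,a')$ attached to the bad pairs themselves, shows $\int_{3\mathcal{J}}\chi_r^2\geq \tfrac13 c_6^{-1}c_9^{-2}\int_{3\mathcal{J}}\chi_r$ for each bad pair, and then passes to the union $\mathcal{J}^*$ by extracting a subfamily of these intervals covering $\mathcal{J}^*$ with multiplicity at most two; you instead work on a fixed grid $\{I_k\}$ of scale $\asymp c_9\rho^{1/2}$, prove an $\ell^2$ bound on the occupation numbers $n_k$, and translate badness into $n_{k-1}+n_k+n_{k+1}>c_6^{-1}\rho^{-\frac12(d+d'-1)}$, which replaces the covering-lemma extraction by the elementary inequality $\sum_k n_k n_{k\pm1}\leq\sum_k n_k^2$. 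What your route buys is that the bounded-overlap step is trivial (a fixed grid has overlap $O(1)$ by fiat), at the cost of slightly more bookkeeping in relating centers, grid cells and the radius $c_9^{-1}\rho^{1/2}$; what the paper's route buys is that it lands exactly on the stated constant $6c_5c_6c_9^3$, whereas yours gives $C\,c_5c_6c_9^3\rho^{-\frac12(d+d')}$ with an unspecified absolute constant $C$. That discrepancy is immaterial: everything downstream (Lemma \ref{proj} and the choice of $c_6$) only uses the shape of the bound and the ``in particular'' clause, and your final step deducing at least $|\mathcal{B}|/2$ good pairs for $c_6$ small, using $|\mathcal{B}|>|\mathcal{A}\times\mathcal{A}'|/24\asymp\rho^{-\frac12(d+d')}$, matches the paper's. (Minor notational point: your $N=c_6^{-1}\rho^{-\frac12(d+d'-1)}$ clashes with the paper's $N=c_6^{2}\rho^{-\frac12(d+d'-1)}$ from Section \ref{construct}; rename it to avoid confusion.)
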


\begin{proof} 
Let $(a, a') \in \mathcal{B}$ be $(\mathcal{B}, r)$-bad. 
Then we have 
\begin{equation*}
\begin{aligned}
\int_{3 \mathcal{J}( a, a')} \chi_{r} 
&\geq c^{-1}_{9} \rho^{ \frac{1}{2}(d + d')} \cdot c_{6}^{-1} \rho^{ -\frac{1}{2} (d + d' -1) } \\
&= c^{-1}_{6} c^{-1}_{9} \rho^{1/2} > \frac{1}{3} c_{6}^{-1} c_{9}^{-2} | 3 \mathcal{J}( a, a' ) |, 
\end{aligned}
\end{equation*}
where $3 \mathcal{J}(a, a')$ is the interval of the same center as $\mathcal{J}(a, a')$ and length $3| \mathcal{J}(a, a') |$. 
By the Cauchy-Schwarz inequality, 
\begin{equation*}
\begin{aligned}
\frac{1}{3} c_{6}^{-1} c_{9}^{-2} | 3 \mathcal{J}( a, a' ) | \int_{3 \mathcal{J}( a, a' )} \chi_{r}  &\leq 
\left( \int_{3 \mathcal{J}( a, a' )} \chi_{r}  \right)^{2} \\
&\leq | 3 \mathcal{J} ( a, a' ) | \int_{3 \mathcal{J}( a, a' )} \chi_{r}^{2}, 
\end{aligned}
\end{equation*}
and thus 
\begin{equation*}
\int_{3 \mathcal{J}( a, a' )} \chi_{r}^{2} \geq 
\frac{1}{3} c_{6}^{-1} c_{9}^{-2}  \int_{3 \mathcal{J}( a, a' )} \chi_{r}. 
\end{equation*}
Let $\mathcal{J}^{*}$ be the union over all $( \mathcal{B}, r )$-bad pairs  
$(a, a')$ of the intervals $3\mathcal{J}( a, a' )$. One can 
extract a subfamily of intervals whose union is $\mathcal{J}^{*}$ and does not cover any point more than twice. 
Then we obtain 
\begin{equation*}
\int_{\mathcal{J}^*} \chi_{r}^{2}  \geq 
\frac{1}{6} c_{6}^{-1} c_{9}^{-2}  \int_{ \mathcal{J}^{*} } \chi_{r}.
\end{equation*}
Therefore,   
\begin{equation*}
 \int_{ \mathcal{J}^{*}} \chi_{r} \leq 6 c_5 c_{6} c_{9}^2. 
\end{equation*}
As $\mathcal{J}^{*}$ contains $\mathcal{J}( a, a' )$ 
for all $( \mathcal{B}, r )$-bad pairs $(a, a')$, together with (\ref{measure}) 
the estimate of the lemma follows. 
\end{proof}

Lemma \ref{bad_good} implies the following: 
 
\begin{lem}\label{proj}
\begin{equation*}
\Big| \bigcup_{ (a, a') \in \mathcal{B} } \mathcal{J}(a, a') \Big| > c_6 c_{10}. 
\end{equation*}
\end{lem}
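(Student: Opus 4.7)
The plan is to use Lemma \ref{bad_good} to isolate a large subfamily of ``good'' pairs in $\mathcal{B}$, thin it greedily into a subfamily whose projection-centers are well separated, and then convert this separation into a lower bound on the measure of $\bigcup \mathcal{J}(a,a')$ via a multiplicity estimate. First, since $\mu$ and $\mu'$ are the uniform self-similar measures, we have $\mu(I(a)) \asymp |I(a)|^d \asymp \rho^{d/2}$ (and similarly for $\mu'$), so $|\mathcal{A}| \asymp \rho^{-d/2}$, $|\mathcal{A}'| \asymp \rho^{-d'/2}$. Together with $|\mathcal{B}| > |\mathcal{A} \times \mathcal{A}'|/24$, Lemma \ref{bad_good} (with $c_6$ sufficiently small) provides a subset $\mathcal{G} \subset \mathcal{B}$ of $(\mathcal{B},r)$-good pairs with $|\mathcal{G}| \geq |\mathcal{B}|/2 \gtrsim \rho^{-(d+d')/2}$.

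Next I would greedily extract $\mathcal{G}' \subset \mathcal{G}$ so that the centers of the intervals $\mathcal{J}(a,a')$, $(a,a') \in \mathcal{G}'$, are pairwise separated by at least $c_9^{-1}\rho^{1/2}$. Each time a pair is chosen, the goodness property eliminates at most $c_6^{-1}\rho^{-\frac{1}{2}(d+d'-1)}$ candidates from $\mathcal{G}$. Iterating gives
\begin{equation*}
|\mathcal{G}'| \geq \frac{|\mathcal{G}|}{c_6^{-1}\rho^{-\frac{1}{2}(d+d'-1)}} \gtrsim c_6 \, \rho^{-1/2}.
\end{equation*}

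To finish, I would bound the multiplicity of the cover by $\{\mathcal{J}(a,a') : (a,a') \in \mathcal{G}'\}$. Since every such interval has length at most $c_9 \rho^{1/2}$ while the chosen centers are pairwise at distance at least $c_9^{-1}\rho^{1/2}$, no point of $\mathbb{R}$ lies in more than $O(c_9^2)$ of these intervals. Consequently
\begin{equation*}
\Big| \bigcup_{(a,a') \in \mathcal{B}} \mathcal{J}(a,a') \Big| \geq \Big| \bigcup_{(a,a') \in \mathcal{G}'} \mathcal{J}(a,a') \Big| \gtrsim \frac{|\mathcal{G}'| \cdot c_9^{-1}\rho^{1/2}}{c_9^2} \gtrsim c_6,
\end{equation*}
and choosing $c_{10}$ to absorb the constants depending only on $c_9$ yields the claim.

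The main obstacle is really bookkeeping rather than substance: one must arrange the hierarchy of constants so that Lemma \ref{bad_good} still guarantees a majority of good pairs after $c_6$ has been fixed, and then confirm that the final lower bound depends only on $c_6$ times an absolute constant. The geometric content — that goodness enforces separation, separation enforces bounded multiplicity, and bounded multiplicity converts total length into union measure — is straightforward.
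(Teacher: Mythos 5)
Your proof is correct and is essentially the paper's own argument made explicit: the paper's one-line bound $\big|\bigcup\mathcal{J}(a,a')\big| > \frac{1}{2}|\mathcal{B}|\cdot\big(c_6^{-1}\rho^{-\frac{1}{2}(d+d'-1)}\big)^{-1}\cdot c_9^{-1}\rho^{1/2}$ is exactly your combination of the good-pair count from Lemma \ref{bad_good}, the cluster-size bound coming from goodness (your greedy extraction plus bounded multiplicity), and the separation scale $c_9^{-1}\rho^{1/2}$, followed by $|\mathcal{A}\times\mathcal{A}'| \gtrsim c_9^{-1}\rho^{-\frac{1}{2}(d+d')}$ from (\ref{measure}). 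The differences are purely presentational, and your final bound of $c_6$ times a constant depending only on $c_9$ matches the paper's $\frac{1}{48}c_6c_9^{-3}$, which is how $c_{10}$ is fixed.
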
 
 
\begin{proof}
We have 
\begin{equation*}
\begin{aligned}
\Big| \bigcup_{ (a, a') \in \mathcal{B} } \mathcal{J}(a, a') \Big| &> 
\frac{1}{2} | \mathcal{B} | \cdot 
\left( c^{-1}_6 \rho^{-\frac{1}{2}(d + d' - 1)} \right)^{-1} \cdot c_9^{-1} \rho^{1/2} \\  
&> \frac{1}{2} \cdot \frac{1}{24} \cdot c_9^{-1} \rho^{-\frac{1}{2} d} \cdot c_9^{-1} \rho^{-\frac{1}{2} d'}
\cdot c_6 \rho^{\frac{1}{2}(d + d' - 1)} \cdot c_9^{-1} \rho^{1/2} 
= \frac{1}{48} c_6 c^{-3}_{9}. 
\end{aligned}
\end{equation*}
\end{proof}

\subsection{Proof of Proposition \ref{nice_estimate}} 
In this section we prove Proposition \ref{nice_estimate}. 
We fix $r \in E$ for the rest of the section. We assume that $r \geq 1$. 
The case of $r < 1$ is completely analogous. 
Let $a_1 \in \mathcal{A}^l_1$, $a'_1 \in \mathcal{A}'$ and $\gamma \in (-\epsilon, \epsilon)$.  
Note that 
\begin{equation*}
M^{-1} < \frac{ | I_r'(a'_1) | }{ | I^{\gamma, 0}( a_1 ) | } < M. 
\end{equation*}
Define 
\begin{equation*}
\Lambda_{ a_1, a'_1, \gamma } = 
\Big\{ ( a_2, a'_2 ) \in \mathcal{A}^l_2 \times \mathcal{A}' : 
\frac{ | I'_r(a'_1 a'_2) | }{ | I^{\gamma, 0}( a_1 a_2 ) | } \in E \Big\} 
\end{equation*}
and 
\begin{equation*}
\Phi^{*}_{a_1, a'_1} = \Big\{ \gamma \in (-\epsilon, \epsilon) : 
| \Lambda_{a_1, a'_1, \gamma} | > \frac{1}{4} | \mathcal{A}^l_2 \times \mathcal{A}' |  \Big\}. 
\end{equation*}
The following lemma is immediate. 
\begin{lem}
For any $a_1 \in \mathcal{A}^l_1$ and $a'_1 \in \mathcal{A}'$, 
we have $| \Phi^{*}_{a_1, a'_1} | >  \epsilon$. 
\end{lem}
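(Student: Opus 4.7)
The plan is to apply Fubini, follow with a change of variables in $\gamma$, and conclude by Markov's inequality.

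First, I will observe that since $f^{\gamma, 0}_{a_1}$ differs from $f_{a_1}$ only by a factor of $(1+\gamma)$ in scale, we have $|I^{\gamma,0}(a_1 a_2)| = (1+\gamma)|I(a_1 a_2)|$, so writing $\lambda(a_2, a'_2) := |I'_r(a'_1 a'_2)|/|I(a_1 a_2)|$ (independent of $\gamma$), the ratio defining $\Lambda_{a_1, a'_1, \gamma}$ becomes $\lambda(a_2, a'_2)/(1+\gamma)$. Using the size conditions $c_0^{-1}\rho^{1/2} < |I(a)|, |I'(a')| < c_0 \rho^{1/2}$ together with $a_1, a_2 \in \mathcal{A}^l$ (which forces $|I'(a'_1)|/|I(a_1)|, |I'(a'_2)|/|I(a_2)| \in (c_0^{-2}, 1)$), the quantity $\lambda(a_2, a'_2)/(1+\gamma)$ lies in a bounded interval independent of $\rho$, and for $M$ sufficiently large (as assumed in the construction of $E$) this interval is contained in $(M^{-1}, M)$.

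Next, by Fubini,
\begin{equation*}
\int_{-\epsilon}^{\epsilon} \left| (\mathcal{A}^l_2 \times \mathcal{A}') \setminus \Lambda_{a_1, a'_1, \gamma} \right| d\gamma
= \sum_{(a_2, a'_2) \in \mathcal{A}^l_2 \times \mathcal{A}'} \left| \left\{ \gamma \in (-\epsilon, \epsilon) : \lambda(a_2, a'_2)/(1+\gamma) \in E^c \right\} \right|,
\end{equation*}
where $E^c = (M^{-1}, M) \setminus E$. For each summand, the map $\gamma \mapsto \lambda/(1+\gamma)$ is a diffeomorphism with Jacobian $\lambda/u^2$ bounded above by a constant $C = C(M)$, so by change of variables each summand is at most $C |E^c|$. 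Thus the integral is at most $C |E^c| \cdot |\mathcal{A}^l_2 \times \mathcal{A}'|$, and Markov's inequality gives
\begin{equation*}
\left| \left\{ \gamma \in (-\epsilon, \epsilon) : |\Lambda_{a_1, a'_1, \gamma}| \leq \tfrac{1}{4}|\mathcal{A}^l_2 \times \mathcal{A}'| \right\} \right| \leq \tfrac{4}{3} C |E^c|,
\end{equation*}
which is strictly less than $\epsilon$ once $c_5$ is chosen large enough that $|E^c| < 3\epsilon/(4C)$ --- a harmless strengthening of the author's requirement $|E^c| < \epsilon/2$. The conclusion $|\Phi^*_{a_1, a'_1}| > \epsilon$ then follows.

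The main obstacle is not conceptual but bookkeeping: one must verify that $\lambda(a_2, a'_2)/(1+\gamma)$ remains in $(M^{-1}, M)$ for every relevant pair and every $\gamma$ (which rests on $a_1, a_2 \in \mathcal{A}^l$ and on $M$ being taken large enough relative to $c_0$), and that $c_5$ can be enlarged to absorb the $M$-dependence of the constant $C$ coming from the Jacobian bound.
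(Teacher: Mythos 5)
Your proof is correct and takes essentially the same route as the paper: a per-pair estimate in $\gamma$ for each $(a_2,a'_2)\in\mathcal{A}^l_2\times\mathcal{A}'$ followed by a Fubini/Markov averaging over the pairs, where the paper simply asserts the per-pair bound (``by the construction of $E$'') and integrates the sum of indicator functions, while you supply the change-of-variables/Jacobian justification at the harmless cost of enlarging $c_5$. One small caveat: the containment of $\lambda(a_2,a'_2)/(1+\gamma)$ in $(M^{-1},M)$ does not follow from taking $M$ large (it can fail at the upper endpoint when $r$ lies within a factor $(1-\epsilon)^{-1}$ of $M$), but the paper's own per-pair assertion carries the same implicit endpoint issue, and both arguments are repaired by trimming the working interval slightly.
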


\begin{proof}
By the construction of $E$, we have 
\begin{equation*}
\Big| \Big\{ \gamma \in (-\epsilon, \epsilon) : \frac{ | I_r'(a'_1 a'_2) | }{ | I^{\gamma, 0}( a_1 a_2 ) | }
\in E \Big\} \Big| > \frac{3}{2} \epsilon 
\end{equation*}
for all $a_2 \in \mathcal{A}^l_2$ and $a'_2 \in \mathcal{A}'$. 
Let $\psi$ be the sum, over $(a_2, a'_2) \in \mathcal{A}^l_2 \times \mathcal{A}'$, 
of the characteristic functions of 
\begin{equation*}
\Big\{ \gamma \in (-\epsilon, \epsilon) : 
\frac{ | I_r'(a'_1 a'_2) | }{ | I^{\gamma, 0}( a_1 a_2 ) | }
\in E \Big\}.
\end{equation*} 
Note that $0 \leq \psi \leq | \mathcal{A}^l_2 \times \mathcal{A}' |$ and 
\begin{equation*}
\gamma \in \Phi^{*}_{a_1, a'_1} \iff \psi( \gamma ) > \frac{1}{4} | \mathcal{A}^l_2 \times \mathcal{A}' |.
\end{equation*}   
Therefore, we have
\begin{equation*}
\begin{aligned}
\frac{3}{2} \epsilon \cdot | \mathcal{A}^l_2 \times \mathcal{A}' | < 
\int_{(-\epsilon, \epsilon)} \psi &= \int_{\Phi^{*}_{a_1, a'_1}} \psi + 
\int_{( -\epsilon, \epsilon ) \setminus \Phi^{*}_{a_1, a'_1}} \psi \\
&< | \mathcal{A}^l_2 \times \mathcal{A}' | \cdot | \Phi^{*}_{a_1, a'_1} | + 
\frac{1}{4} | \mathcal{A}^l_2 \times \mathcal{A}' |  \cdot 2 \epsilon. 
\end{aligned}
\end{equation*}
The claim follows from this. 
\end{proof}

For $a_1 \in \mathcal{A}^l_1$, $a'_1 \in \mathcal{A}'$, let  
\begin{equation*}
\Phi^{**}_{a_1, a'_1} = 
\Phi^{*}_{a_1, a'_1} \cap 
\Big\{ \gamma \in (-\epsilon, \epsilon) : \frac{ | I_r'( a'_1 ) | }{ | I^{\gamma, 0}( a_1 ) | } \in E \Big\}. 
\end{equation*}
By the above lemma, 
we have $| \Phi^{**}_{a_1, a'_1} | > \epsilon / 2$. 
For $a_1 \in \mathcal{A}^l_1$, 
$a_2 \in \mathcal{A}^l_2$, $a'_1, a'_2 \in \mathcal{A}'$ 
and $\gamma \in \Phi^{**}_{a_1, a'_1}$, we denote  
\begin{equation*}
J^{\gamma}( a_1 a_2, a'_1 a'_2 ) = \Pi \left( I^{ \gamma, 0 }(a_1 a_2) \times I_r'(a'_1 a'_2) \right), 
\end{equation*}
and for $a_1 \in \mathcal{A}^l_1$, $a'_1 \in \mathcal{A}'$ and $\gamma \in \Phi^{**}_{a_1, a'_1}$, write 
\begin{equation*}
J^{\gamma}(a_1, a'_1) = 
\bigcup_{ (a_2, a'_2) \in \Lambda_{a_1, a'_1, \gamma} } J^{\gamma}( a_1 a_2, a'_1 a'_2 ). 
\end{equation*}
By Lemma \ref{proj}, we have 
\begin{equation*}
| J^{\gamma}(a_1, a'_1) | > c_6 c_{10} \rho^{1/2}. 
\end{equation*} 
For $t \in \mathbb{R}$, let 
\begin{equation*}
\Phi_{a_1, a'_1, t} = \left\{ \gamma \in \Phi_{a_1, a'_1}^{**} : t \in J^{\gamma}(a_1, a'_1) \right\}. 
\end{equation*}
Write 
\begin{equation*}
J_{a_1, a'_1} = \left\{ t \in \mathbb{R} : \left| \Phi_{a_1, a'_1, t} \right| > 
\frac{1}{2} c_6 c_9^{-1} c_{10} | \Phi^{**}_{a_1, a'_1} | \right\}. 
\end{equation*}
Note that we have 
\begin{equation*}
\begin{aligned}
\frac{1}{2} c_{6} c_9^{-1} c_{10} | \Phi_{a_1, a'_1}^{**} | 
&> \frac{1}{2} c_{6} c_9^{-1} c_{10} \cdot \frac{1}{2} \epsilon \\
&=: c_7. 
\end{aligned}
\end{equation*}

\begin{lem}
We have 
\begin{equation*}
| J_{a_1, a'_1} | > \frac{1}{2} c_6 c_{10} \rho^{1/2}.
\end{equation*} 
\end{lem}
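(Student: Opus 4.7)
The plan is a Fubini--Chebyshev argument applied to the function $F(t) := |\Phi_{a_1, a'_1, t}|$ on $\mathbb{R}$: first I bound $\int F$ from below via Fubini, then I bound the support of $F$ from above, and finally I split the integral using the threshold that defines $J_{a_1, a'_1}$.

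For the lower bound, Fubini gives
\[
\int_{\mathbb{R}} F(t)\, dt \;=\; \int_{\Phi^{**}_{a_1, a'_1}} |J^{\gamma}(a_1, a'_1)|\, d\gamma \;>\; c_6 c_{10}\, \rho^{1/2}\, |\Phi^{**}_{a_1, a'_1}|,
\]
where the pointwise estimate $|J^{\gamma}(a_1, a'_1)| > c_6 c_{10} \rho^{1/2}$ is exactly the one recorded in the paragraph immediately before the statement of the lemma (an application of Lemma \ref{proj} in renormalized coordinates, with $\mathcal{B} = \Lambda_{a_1, a'_1, \gamma}$, noting that by the definition of $\Phi^{*}_{a_1, a'_1}$ one has $|\Lambda_{a_1, a'_1, \gamma}| > |\mathcal{A}^l_2 \times \mathcal{A}'|/4 > |\mathcal{A} \times \mathcal{A}'|/24$).

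For the support bound, the perturbation with $\omega = (\gamma, 0)$ only rescales $I(a_1)$ about its center, so every $I^{\gamma, 0}(a_1)$ with $\gamma \in (-\epsilon, \epsilon)$ sits inside the fixed interval $I^{\epsilon, 0}(a_1)$. Consequently,
\[
\mathrm{supp}(F) \;\subset\; \bigcup_{\gamma \in \Phi^{**}_{a_1, a'_1}} J^{\gamma}(a_1, a'_1) \;\subset\; \Pi\bigl(I^{\epsilon, 0}(a_1) \times I'_r(a'_1)\bigr),
\]
which is an interval of length at most $c_9 \rho^{1/2}$, after enlarging $c_9$ by a factor depending only on $c_0$, $M$ and $\epsilon$ if necessary.

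Splitting the integral at the threshold defining $J_{a_1, a'_1}$ and using the trivial bound $F \leq |\Phi^{**}_{a_1, a'_1}|$ on $J_{a_1, a'_1}$, together with the defining inequality $F(t) \leq \tfrac{1}{2} c_6 c_9^{-1} c_{10} |\Phi^{**}_{a_1, a'_1}|$ on $\mathrm{supp}(F) \setminus J_{a_1, a'_1}$, yields
\[
\int_{\mathbb{R}} F \;\leq\; |J_{a_1, a'_1}|\cdot |\Phi^{**}_{a_1, a'_1}| \;+\; \tfrac{1}{2} c_6 c_9^{-1} c_{10} |\Phi^{**}_{a_1, a'_1}| \cdot c_9 \rho^{1/2}.
\]
Comparing with the Fubini lower bound and dividing through by $|\Phi^{**}_{a_1, a'_1}|$, which is positive by the preceding lemma, gives the claimed estimate $|J_{a_1, a'_1}| > \tfrac{1}{2} c_6 c_{10} \rho^{1/2}$. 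The only delicate point is the bookkeeping of constants in the second step: one must verify that the constant $c_9$ appearing in the threshold defining $J_{a_1, a'_1}$ is large enough to simultaneously bound the length of $\Pi(I^{\epsilon, 0}(a_1) \times I'_r(a'_1))$, so that the cancellation $c_9^{-1} \cdot c_9 = 1$ produces exactly the final constant $\tfrac{1}{2} c_6 c_{10}$.
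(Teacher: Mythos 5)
Your argument is correct and is essentially identical to the paper's own proof: both integrate the indicator of $\{(t,\gamma): \gamma \in \Phi^{**}_{a_1,a'_1},\, t \in J^{\gamma}(a_1,a'_1)\}$, apply Fubini with the lower bound $|J^{\gamma}(a_1,a'_1)| > c_6 c_{10}\rho^{1/2}$, split the $t$-integral at the threshold defining $J_{a_1,a'_1}$, and use that the $t$-support has length at most $c_9\rho^{1/2}$. The bookkeeping point you flag about $c_9$ also bounding the support length is handled in the paper by the standing convention that the constants $c_k$ may be adjusted independently of $\rho$.
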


\begin{proof}
Let us integrate the characteristic function of 
\begin{equation*}
\big\{ (t, \gamma) : t \in J^{\gamma}(a_1, a'_1) \text{ for some } 
\gamma \in \Phi^{**}_{a_1, a'_1} \big\}
\end{equation*}
over $\{ (t, \gamma) : t \in \mathbb{R}, \gamma \in ( -\epsilon, \epsilon ) \}$. By Fubini's theorem, we have
\begin{equation*}
\begin{aligned}
| \Phi^{**}_{a_1, a'_1} | \cdot c_6 c_{10} \rho^{1/2} 
&< \int_{\gamma \in (-\epsilon, \epsilon)} \int_{t \in \mathbb{R}} \\ 
&= \int_{t \in \mathbb{R}} \int_{\gamma \in (-\epsilon, \epsilon)} \\ 
&= \int_{t \in J_{a_1, a'_1}} \int_{\gamma \in (-\epsilon, \epsilon)}  + 
\int_{t \in \mathbb{R} \setminus J_{a_1, a'_1} }  \int_{\gamma \in (-\epsilon, \epsilon) } \\
&< | J_{a_1, a'_1} | | \Phi^{**}_{a_1, a'_1} | + c_9 \rho^{1/2} \cdot 
\frac{1}{2} c_{6} c_{9}^{-1} c_{10} | \Phi^{**}_{a_1, a'_1} |. 
\end{aligned}
\end{equation*}
The claim follows from this. 
\end{proof}

Let $\psi$ be the sum, over $( \mathcal{A}^l_1 \times \mathcal{A}', r )$-good pairs 
$(a_1, a'_1)$, of the characteristic functions of 
$J_{a_1, a'_1}$. Note that $\supp \psi \subset ( -1, M )$ and  
\begin{equation*}
0 \leq \psi \leq c_{6}^{-1} \rho^{ -\frac{1}{2} (d + d' - 1)}. 
\end{equation*}
Let $\mathcal{D} = \{  \psi \geq  c^2_{6} \rho^{ -\frac{1}{2} (d + d' - 1)}  \}$.
Then we have 
\begin{equation*}
\begin{aligned}
| \mathcal{D} | \cdot c_{6}^{-1} \rho^{ -\frac{1}{2} (d + d' - 1)} + 
(1 + M) \cdot  c^2_{6} \rho^{ -\frac{1}{2} (d + d' - 1)} 
&\geq \int_{ \mathcal{D} } \psi + \int_{ (-1, M) \setminus \mathcal{D} } \psi  \\
&= \int \psi \\
&\geq \frac{1}{2} c_6 c_{10} \rho^{1/2} \cdot \frac{1}{3} \cdot \frac{1}{2} \cdot c_{9}^{-2} \rho^{-\frac{1}{2} (d + d')}. 
\end{aligned}
\end{equation*}
Take $c_6 > 0$ small enough so that 
\begin{equation*}
(1 + M) c_6^2 < \frac{1}{24} c_{6} c_{9}^{-2} c_{10}. 
\end{equation*}
holds. 
Then we obtain 
\begin{equation*}
| \mathcal{D} | \geq \frac{1}{24} c_{6} c_{9}^{-2} c_{10} \cdot c_{6} =: c_{8}. 
\end{equation*}
Since $\mathcal{D} \subset L(r)$, we have proved that 
\begin{equation*}
| L(r) | \geq c_{8}. 
\end{equation*}  
This concludes the proof of Proposition \ref{nice_estimate}.

\section{Proof of the key Proposition}\label{key_prop}

\subsection{Proof of Proposition \ref{key_lem}} 
In this section we prove Proposition \ref{key_lem}. 
Fix $(r, t) \in \mathcal{L}^1$. 
Let $( \tilde{r}, \tilde{t} ) \in \mathcal{L}^0$ be such that $| r - \tilde{r} | < \rho$ and 
$| t - \tilde{t} | < \rho$. 

Then, there exist mutually distinct words 
$a_1^i \in \mathcal{A}_1 \ (i = 1, 2, \cdots, N)$, words $a'^i_1 \in \mathcal{A}' \ (i = 1, 2, \cdots, N)$ 
and the sets 
$\Phi_{i} \subset (-\epsilon, \epsilon)$ with $| \Phi_{i} | > c_{7}$ 
such that for all $a^i_1$, $a'^i_1$ and $\tilde{ \gamma } \in \Phi_{i}$, there exist 
$a^{i}_2 \in \mathcal{A}_2$ and $a'^i_2 \in \mathcal{A}'$ 
such that 
\begin{equation*}
\frac{ | I_{ \tilde{r} }'(a'^i_1 a'^i_2) | }{ | I^{ \tilde{\gamma}, 0}( a^i_1 a^i_2 ) | }
 \in E \text{ \ and \ } 
\big| \pos_{ \tilde{t} } \big( I^{ \tilde{\gamma}, 0}( a_1^i a_2^i ) \times I_{ \tilde{r} }'( a'^i_1 a'^i_2 ) \big) \big| \leq M. 
\end{equation*}

Let 
\begin{equation*}
\mathcal{A}_3 = \left\{ a^1_1, a^2_1, \cdots, a^N_1 \right\}.
\end{equation*}
Write 
\begin{equation*}
\begin{aligned}
\Omega &= \left( ( -\epsilon, \epsilon ) \times (-1, 1) \right)^{ \mathcal{A}_3 } 
\times \left( ( -\epsilon, \epsilon ) \times (-1, 1) \right)^{ \mathcal{A}_1 \setminus \mathcal{A}_3 } ,  \\
\underline{\omega} &= ( \underline{\omega}', \underline{\omega}'' ), \text{ and \ } 
 \underline{\omega}' = \left( \omega_1, \omega_2, \cdots, \omega_N \right). 
\end{aligned}
\end{equation*}
Denote $\omega_i = ( \gamma_i, \delta_i )$. 
By Fubini's Theorem, Proposition \ref{key_lem} follows from the following claim:

\begin{claim}
There exists $c'_2 > 0$ such that for any $a_1^i \in \mathcal{A}_3$ and $a'^{i}_1$, 
\begin{equation*}
\left|  \left\{  \omega_i : \exists a_2^i \in \mathcal{A}_2, a_2'^{i} \in \mathcal{A}' \text{ s.t. } 
T_{a_1^i a_2^i}^{\omega_i} T'_{ a'^i_1 a'^i_2 } (r, t) \in \mathcal{L}^0 
 \right\}  \right| > c'_2. 
\end{equation*}
\end{claim}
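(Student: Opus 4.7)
The plan is to prove the claim via Fubini by decoupling the roles of the two parameters $\gamma_i$ and $\delta_i$ in $\omega_i = (\gamma_i, \delta_i)$. The essential observation is that after the two-step renormalization $T^{\omega_i}_{a_1^i a_2^i} T'_{a_1'^i a_2'^i}$, the output ratio $\hat{r}$ depends only on $\gamma_i$ (since the length of $f^{\omega_i}_{a_1^i}(I)$ is $(1+\gamma_i) \lambda_{a_1^i}$, independent of $\delta_i$), whereas the output translation $\hat{t}$ responds to $\delta_i$ with a large amplification $\partial \hat{t}/\partial \delta_i \sim c_1 \rho/(\lambda_{a_1^i}^{\gamma_i} \lambda_{a_2^i}) \sim c_1$, which can be made arbitrarily large by choosing $c_1$ large. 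I would use $\gamma_i$ to arrange $\hat{r} \in E$ and $\delta_i$ to place $\hat{t}$ inside $L(\hat{r})$.

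First I would transfer the structure from the nearby $(\tilde{r}, \tilde{t}) \in \mathcal{L}^0$ to the real input $(r, t)$. The construction of $L(\tilde{r})$ at $\tilde{t}$ supplies, for each $\tilde{\gamma} \in \Phi_i$, a pair $(a_2^i, a_2'^i) \in \mathcal{A}_2 \times \mathcal{A}'$ with the reference ratio in $E$ and the reference $\hat{t}_0$ satisfying $|\hat{t}_0| \leq M$. For the actual $r$ I would reparameterize by $\gamma' = \gamma'(\tilde{\gamma}) \in (-\epsilon, \epsilon)$ defined by $|I'_r(a_1'^i a_2'^i)|/|I^{\gamma', 0}(a_1^i a_2^i)| = |I'_{\tilde{r}}(a_1'^i a_2'^i)|/|I^{\tilde{\gamma}, 0}(a_1^i a_2^i)|$, so that membership in $E$ is preserved with the same choice of $(a_2^i, a_2'^i)$; a direct calculation gives $|\gamma' - \tilde{\gamma}| = O(\rho)$. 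Setting $\Phi_i' = \gamma'(\Phi_i)$ yields a set of admissible first coordinates with $|\Phi_i'| \geq c_7 - O(\rho) \geq c_7/2$ for $\rho$ small.

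Next, I would show that for each $\gamma_i \in \Phi_i'$ a constant proportion of $\delta_i \in (-1, 1)$ produces $\hat{t} \in L(\hat{r}(\gamma_i))$. The drift from $\hat{t}(\gamma_i, 0)$ to the reference $\hat{t}_0$ comes from the $O(\rho)$ changes in $(t, r, \gamma)$: the $t$- and $r$-contributions are amplified by $1/(\lambda_{a_1^i}^{\gamma_i} \lambda_{a_2^i}) \sim \rho^{-1}$, each giving $O(1)$; the $\gamma$-contribution is smaller, of order $\rho^{1/2}$. Hence $|\hat{t}(\gamma_i, 0)| \leq 2M$ for $\rho$ small. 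Since $\partial \hat{t}/\partial \delta_i \sim c_1$ uniformly, as $\delta_i$ sweeps $(-1, 1)$ the output $\hat{t}$ sweeps an interval of length $\sim c_1$ centered at $\hat{t}(\gamma_i, 0)$; choosing $c_1$ large enough (depending only on $M$), this interval covers $(-1, M) \supset L(\hat{r}(\gamma_i))$. Combining with $|L(\hat{r}(\gamma_i))| \geq c_8$ from Proposition \ref{nice_estimate}, the set of admissible $\delta_i$ has measure at least $c_8/c_1$.

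By Fubini, the total measure of $\omega_i$ satisfying the return condition is at least $|\Phi_i'| \cdot (c_8/c_1) \geq (c_7 c_8)/(2 c_1) =: c_2'$, proving the claim. The main technical obstacle is uniformly verifying the amplification $\partial \hat{t}/\partial \delta_i \sim c_1$ together with the drift bound $|\hat{t}(\gamma_i, 0) - \hat{t}_0| = O(1)$; these require carefully tracking how the $O(\rho)$ perturbations propagate through the two-step composition, noting in particular the subtle cancellation that keeps the final $\hat{t}$ bounded even though the intermediate first-step translation is of order $\rho^{-1/2}$.
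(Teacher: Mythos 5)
Your proposal is correct and takes essentially the same route as the paper: your reparameterization $\gamma'(\tilde\gamma)$ defined by matching the ratios is exactly the paper's choice $\gamma=(1+\tilde\gamma)\frac{r}{\tilde r}-1$, your $O(1)$ drift bound on $\hat t(\gamma_i,0)$ is the paper's ``the position difference is of order $1$'', and the $\delta$-sweep with amplification $\sim c_1$ hitting $L(\hat r)$ on a set of measure $\gtrsim c_8/c_1$, combined via Fubini over $(\gamma_i,\delta_i)$, is precisely how the paper obtains $c_2'$. Your version is slightly more explicit than the paper's (e.g.\ in quantifying the good $\delta$-set via Proposition \ref{nice_estimate} and in writing $L(\hat r)$ rather than $L(r)$), but the argument is the same.
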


\begin{proof}[Proof of the claim]
Take $a_1^i \in \mathcal{A}_3$, $a'^i_1$ and $\tilde{\gamma} \in \Phi_i$. 
Let 
$\displaystyle{ \gamma = ( 1 + \tilde{\gamma} ) \frac{ r }{ \tilde{r} } - 1 }$.  
Note that 
\begin{equation}\label{maane}
\frac{1 + \tilde{\gamma}}{ 1 + \gamma } = \frac{ \tilde{r} }{r}, 
\end{equation}
and $| \gamma - \tilde{\gamma} |$ is of order $\rho$. 
Let $a_2^i \in \mathcal{A}_2$, $a'^i_2 \in \mathcal{A}'$ be such that 
\begin{equation*}
\frac{ | I_{ \tilde{r} }' (a'^i_1 a'^i_2) | }{ | I^{ \tilde{\gamma}, 0}( a^i_1 a^i_2 ) | }
 \in E \text{ \ and \ } 
\big| \pos_{ \tilde{t} } \big( I^{ \tilde{\gamma}, 0}( a_1^i a_2^i ) \times I_{ \tilde{r} }' ( a'^i_1 a'^i_2 ) \big) \big| \leq M. 
\end{equation*}
By (\ref{maane}), we have 
\begin{equation*}
\frac{ | I_{ \tilde{r} }' (a'^i_1 a'^i_2) | }{ | I^{ \tilde{\gamma}, 0}( a^i_1 a^i_2 ) | } = 
\frac{ | I_{r}' (a'^i_1 a'^i_2) | }{ | I^{ \gamma, 0}( a^i_1 a^i_2 ) | }. 
\end{equation*}
Therefore, we have 
\begin{equation*}
\frac{ | I_r'(a'^i_1 a'^i_2) | }{ | I^{ \gamma, \delta}( a^i_1 a^i_2 ) | } \in E
\end{equation*}
for all $\delta \in (-1, 1)$. It is easy to see that 
\begin{equation*}
\big| \pos_{ t } \big( I^{\gamma, 0} (a_1^{i} a_2^{i} ) \times I_r'( a'^i_1 a'^i_2 ) \big) - 
\pos_{ \tilde{t} } \big( I^{ \tilde{\gamma} , 0} (a_1^{i} a_2^{i} ) \times I_{ \tilde{r} }'( a'^i_1 a'^i_2 ) \big)  \big| 
\end{equation*}
is of order $1$. 
Therefore, 
\begin{equation*}
\big| \pos_{ t } \big( I^{\gamma, 0} (a_1^{i} a_2^{i} ) \times I_r'( a'^i_1 a'^i_2 ) \big) \big| 
\end{equation*}
 is also of order $1$. 
It follows that, by taking $c_1 > 0$ large enough, we obtain 
\begin{equation*}
\big| \big\{ \delta \in (-1, 1) : \pos_{ t } \big( I^{\gamma, \delta} (a_1^{i} a_2^{i} ) \times I_r'( a'^i_1 a'^i_2 ) \big)  
\in L( r )  \big\} \big| > c_2''. 
\end{equation*}
Therefore, 
\begin{equation*}
\begin{aligned}
\left|  \left\{  \omega_i : \exists a_2^i \in \mathcal{A}_2, a_2'^{i} \in \mathcal{A}' \text{ s.t. } 
T_{a_1^i a_2^i}^{\omega_i} T'_{ a'^i_1 a'^i_2 } (r, t) \in \mathcal{L}^0 
 \right\} \right| 
&> |  \Phi_i  | / 2 \cdot c_2'' / 2 \\
&=: c_2'. 
\end{aligned}
\end{equation*}
\end{proof}

\section{The case of $K = K'$}\label{proof2}
\subsection{Proof of Theorem \ref{thm2}}
In this section we explain how to modify the proof of Theorem \ref{thm1} to 
prove Theorem \ref{thm2}. 
We assume $\mathcal{F} = \{ f_a \}_{a \in \mathcal{A}}$ to satisfy the following:  
\begin{itemize}
\item[(i)] $c_0^{-1} \rho^{1/2} < | I(a) | < c_0 \rho^{1/2}$ for all $a \in \mathcal{A}$; 
\item[(ii)] there exist $\mathcal{A}^{l}$, $\mathcal{A}^{s} \subset \mathcal{A}$ that satisfy   
$\mathcal{A}^{l} \sqcup \mathcal{A}^{s} = \mathcal{A}$, $| \mathcal{A}^l | = | \mathcal{A}^s | = | \mathcal{A} | / 2$ and 
$\min_{a \in \mathcal{A}^l } | I(a) | > \max_{a \in \mathcal{A}^s } | I( a ) |$. 
\end{itemize} 
We take $\mathcal{A}^l_1$, $\mathcal{A}^l_2 \subset \mathcal{A}^l$ and 
$\mathcal{A}^s_1$, $\mathcal{A}^s_2 \subset \mathcal{A}^s$ as in the proof of Theorem \ref{thm1}.  

Next we modify the definition of $L(r)$. Assume that $r \geq 1$. The case of $r < 1$ is analogous. 
We define $L(r)$ to be the set of points 
$t \in (-1, M)$ such that the following holds: 
there exist mutually distinct words 
$a_1^i \in \mathcal{A}_1 \ (i = 1, 2, \cdots, N)$, words $a'^i_1 \in \mathcal{A}_2 \ (i = 1, 2, \cdots, N)$ 
and the sets 
$\Phi_{i} \subset (-\epsilon, \epsilon)$ with $| \Phi_{i} | > c_{7}$ 
such that for all $a^i_1$, $a'^i_1$ and $\gamma \in \Phi_{i}$, there exist 
$a^{i}_2 \in \mathcal{A}_2$ and $a'^i_2 \in \mathcal{A}_2$ 
such that 
\begin{equation*}
\frac{ | I_r(a'^i_1 a'^i_2) | }{ | I^{\gamma, 0}( a^i_1 a^i_2 ) | }
 \in E \text{ \ and \ } 
| \pos_{t} \left( I^{\gamma, 0}( a_1^i a_2^i ) \times I_r( a'^i_1 a'^i_2 ) \right) | \leq M. 
\end{equation*}
The rest is analogous to the proof of Theorem \ref{thm1}.

\section*{Acknowledgements}
The author is grateful to Boris Solomyak for many helpful discussions and comments.

\end{document}